\documentclass[A4,twoside,11pt,reqno]{article}
\usepackage{amssymb}
\usepackage{amsthm}
\usepackage[tbtags]{amsmath}
\usepackage{doc}
\usepackage{latexsym}
\usepackage{amscd}
\usepackage{graphics}
\usepackage{epsfig}
\usepackage{eucal}
\usepackage{mathrsfs}
\newtheorem{theorem}{Theorem}[section]
\newtheorem{proposition}[theorem]{Proposition}
\newtheorem{lemma}[theorem]{Lemma}
\newtheorem{corollary}[theorem]{Corollary}

\theoremstyle{definition}

\theoremstyle{remark}

\numberwithin{equation}{section}

\begin{document}

\title{{\bf Remarks on quadratic left Bol algebras}}

\author{A. Nourou Issa \\ {\it D\'epartement de Math\'ematiques Universit\'e d'Abomey-Calavi}\\{\it 01 BP 4521 Cotonou 01, Benin}\\
{\it woraniss@yahoo.fr}}



\date{}
\maketitle

\begin{abstract}
In this paper the notion of a quadratic (left) Bol algebra is discussed. Several examples of quadratic Bol algebras are given and it is observed that the only two-dimensional quadratic real Bol algebras are quadratic Lie triple systems. Dual representations of Bol algebras are investigated with a particular emphasis on coadjoint representations for quadratic Bol algebras. The notion of $T^*$-extension of a quadratic Bol algebra is introduced.
\end{abstract}

{\it keywords:} Quadratic algebra; dual representation; $T^*$-extension.\\

AMS Subject Classification: 17B05, 17B30, 17D99


\section{Introduction}

Smooth (or analytic) loops are known to be a nonassociative generalization of Lie groups whose tangent structures are Lie algebras. Therefore many mathematicians considered the problem of the suitable tangent algebra for some local smooth loop (i.e. a setting for a general nonassociative Lie theory). The first attempt in solving this problem seems to be done by M. A. Akivis \cite{Ak} who introduced the notion of a $W$-algebra (now termed Akivis algebra \cite{HS}) as a tangent algebra to a local $3$-web. A decade later, L. V. Sabinin and P. O. Mikheev \cite{SM2} introduced the class of hyperalgebras (now called Sabinin algebras) as a broad generalization of Akivis algebras, and this finally solved the problem of the extension of the Lie theory up to smooth loops. For a modern treatment of Sabinin algebras, one refers to \cite{MP, MPS, Per1, SU, Wei}.
\par
It should be observed that the very beginnings towards a generalization of Lie theory were related to some types of smooth loops such as Moufang loops with the pioneer paper by A. I. Maltsev \cite{Mal} who introduced the class of Moufang-Lie algebras (now called Maltsev algebras \cite{Sag}) as tangent algebras to local smooth Moufang loops. Generalizing the E. Cartan's study of the differential geometry of Lie groups, L. V. Sabinin and P. O. Mikheev investigated the differential geometry of smooth Bol loops giving rise to the notion of a Bol algebra as a tangent algebra to a local smooth Bol loop (\cite{SM1, Mik}). Since their introduction, papers devoted to Bol algebras are relatively scarce in contrast to the case of Lie-Yamaguti algebras (i.e. generalized Lie triple systems \cite{Yam2} or Lie triple algebras \cite{K1}). Solvability and semisimplicity as well as quadratic structures of Bol algebras were introduced and investigated by E. N. Kuzmin and O. Zaidi in \cite{KZ}. For other investigated so far topics on Bol algebras one may refer to \cite{ACF, BM, Fil, HP, I1, Per, ZZ}.
\par
For a given left Bol algebra, a representation was defined in \cite{I1}. In this paper we define a coadjoint representation for a quadratic left Bol algebra, and next its coadjoint representation is used to construct a chain of quadratic left Bol algebras. An attempt towards a study of dual representations of (not necessarily quadratic) left Bol algebras is also done. The notion of a $T^*$-extension of a quadratic left Bol algebra is introduced.
\par
The organization of the paper is as follows.  In section 2 we recall some basic notions and fundamental results on Bol algebras. Examples of quadratic left Bol algebras are given and symmetric bilinear forms induced by a quadratic structure on a Bol algebra are considered. In section 3 we study dual representations of Bol algebras, either quadratic or not. While for a quadratic Bol algebra a representation as defined in \cite{I1} naturally induces a coadjoint representation, for a general Bol algebra $T$ (i.e. not necessarily quadratic) the notion of dual representation is subjected to some conditions (we still do not know whether another specific definition of the dual representation maps could avoid resorting to these conditions). In section 4, for a given quadratic Bol algebra, its $T^*$-extension is defined as a generalization of the one for Lie triple systems \cite{LWD}.
\par
Throughout this paper all vector spaces and algebras are finite-dimensional over $\mathbb{R}$, unless otherwise stated.

\section{Definitions, Examples, and Induced Bilinear Forms}

In this section we recall some basic notions and constructions on Bol algebras including the notion of an invariant bilinear form as in \cite{KZ} for right Bol algebras. Next we give a definition of a quadratic (left) Bol algebra according to the equivalence between the left and the right invariant conditions in Lie triple systems. Several examples of quadratic Bol algebras are given. It is proved that, except two-dimensional Lie triple systems, there does not exist any quadratic two-dimensional real Bol algebra. Finally, symmetric bilinear forms induced by a quadratic structure on a given Bol algebra are investigated.\\
\\
\noindent
{\bf Definition 2.1.} (\cite{Mik, SM1}).
 A left Bol algebra is a triple $(T, *, [\![, ,]\!])$ consisting of a
 vector space $T$, a binary operation $* : T \times T \rightarrow T$ and a ternary operation $[\![ , , ]\!]: T \times T  \times T \rightarrow T$ satisfying the following identities:
 \par
 (T01) $x*y = - y*x$,
 \par
 (T02) $[\![ x,y,z ]\!] = -[\![ y,x,z ]\!]$,
 \par
 (T1) $[\![ x,y,z ]\!] +  [\![ y,z,x ]\!] + [\![ z,x,y ]\!]= 0$,
 \par
 (T2) $[\![ u,v, [\![ x,y,z ]\!] ]\!]=[\![ [\![u,v,x]\!], y,z ]\!] + [\![ x,[\![u,v,y]\!],z ]\!] + [\![ x,y,[\![u,v,z]\!] ]\!]$,
 \par
 (T3) $ [\![ u,v,x*y ]\!] = [\![ u,v,x ]\!]*y + x*[\![ u,v,y ]\!] + [\![ x,y, u*v ]\!] - xy * uv$\\
 for all $u,v,x,y,z \in T$, where $ xy * uv := (x*y)*(u*v)$.
\\
\par
The definition given above is the one of a {\it left} Bol algebra. Setting $x \cdot y := -x*y$ and $\langle z,x,y \rangle := - [\![ x,y,z ]\!]$, one gets a {\it right} Bol algebra
$(T, \cdot , \langle , , \rangle)$. In \cite{KZ} only right Bol algebras were considered. Throughout this paper by a Bol algebra we always mean a left Bol algebra.\\
\\
\noindent
{\bf Remark 2.2.}
The identities (T02), (T1), and (T2) define a {\it Lie triple system} $(T, [\![, ,]\!])$. Therefore a Bol algebra could be seen as a Lie triple system with an additional binary operation satisfying (T01) and (T3).
\\

\noindent
{\bf Example 2.3.}
In \cite{KZ} a classification of real two-dimensional right Bol algebras was given. From this classification one gets that the following table
\par
$e_1 * e_2 = -e_2$, $[\![e_1,e_2 ,e_1]\!] = e_2$, $[\![e_1,e_2 ,e_2]\!] = -e_1$\\
defines a real two-dimensional left Bol algebra $(T, *, [\![, ,]\!])$ with basis $\{ e_1, e_2\}$.
\\
\par
In fact the classification in \cite{KZ} can be reformulated for real two-dimensional left Bol algebras as follows.

\begin{theorem}\label{th2.1}
(\cite{KZ}).
Every two-dimensional left Bol algebra $(T, *, [\![, ,]\!])$ over
$\mathbb{R}$ has a canonical basis $\{ e_1, e_2\}$ such that $(T, *, [\![, ,]\!])$ is of one of the following types:
\par
${\mathrm{I.}}$ $e_1 * e_2 = 0$, $[\![e_1,e_2 ,e_1]\!] = {\varepsilon}_2 e_2$, $[\![e_1,e_2 ,e_2]\!] = - {\varepsilon}_1 e_1$,\\
where $({\varepsilon}_1 , {\varepsilon}_2) = (0,0), (\pm 1,0), (1, \pm 1)$ or
$(-1,-1)$;
\par
${\mathrm{II. (i)}}$  $e_1 * e_2 =- e_2$, $[\![e_1,e_2 ,e_1]\!] = {\beta} e_2$, $[\![e_1,e_2 ,e_2]\!] = - {\varepsilon} e_1$,\\
where $\varepsilon = 0$ or $\pm 1$;
\par
${\mathrm{(ii)}}$ $e_1 * e_2 =- e_2$,
$[\![e_1,e_2 ,e_1]\!] = e_1$, $[\![e_1,e_2 ,e_2]\!] = - e_2$.
\end{theorem}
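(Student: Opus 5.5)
The quickest route is through the dictionary recalled after Definition 2.1. Setting $x\cdot y := -x*y$ and $\langle z,x,y\rangle := -[\![x,y,z]\!]$ turns a left Bol algebra into a right Bol algebra and back, bijectively and compatibly with changes of basis. So the list of Kuzmin and Zaidi \cite{KZ} for real two-dimensional right Bol algebras can be transported type by type. I would do this as a bookkeeping step, taking care of the signs in the normalizations $e_1*e_2=-e_2$, $\varepsilon_i$ and $\beta$. Since this depends on the exact form of their table, I would also give an independent direct classification, as follows.

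\emph{Step 1: reduce to linear algebra.} Let $\dim T=2$ with basis $\{e_1,e_2\}$. By (T01) and (T02), the whole structure is determined by the vector $w:=e_1*e_2$ and the linear map $A:=[\![e_1,e_2,\cdot\,]\!]$. For general $x,y$ we have $x*y=\det(x,y)\,w$ and $[\![x,y,z]\!]=\det(x,y)\,Az$.
\begin{itemize}
\item Identity (T1) holds automatically. In a two-dimensional space any three arguments are linearly dependent, and multilinearity together with (T02) makes the cyclic sum vanish on basis triples.
\item For (T2), by multilinearity it suffices to take $u=e_1$, $v=e_2$. The identity then reads $\det(Ax,y)+\det(x,Ay)=0$ after cancelling the common term, multiplied by $Az$. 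Since $\det(Ax,y)+\det(x,Ay)=\operatorname{tr}(A)\det(x,y)$, (T2) is equivalent to $\operatorname{tr}(A)\,A=0$.
\item For (T3), the term $[\![u,v,x*y]\!]$ cancels against $[\![x,y,u*v]\!]$, because both equal $\det(u,v)\det(x,y)Aw$. The term $xy*uv$ vanishes since $w*w=0$. What remains is $\operatorname{tr}(A)\,w=0$.
\end{itemize}
So two-dimensional left Bol algebras correspond exactly to pairs $(A,w)$ satisfying $\operatorname{tr}(A)A=0$ and $\operatorname{tr}(A)w=0$.

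\emph{Step 2: set up the normal-form problem.} Under a change of basis $g\in GL_2(\mathbb{R})$, one has $w\mapsto \det(g)^{-1}g^{-1}w$ and $A\mapsto \det(g)^{-1}g^{-1}Ag$, up to the convention for the direction of $g$. The classification therefore becomes a normal-form problem for $(A,w)$ under this twisted action. In every case either $\operatorname{tr}A=0$ or $A=0$; the case $A=0$ is the subcase $(0,0)$ of type I.

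\emph{Case $w=0$.} We classify traceless $A$ up to conjugation and up to rescaling by the nonzero real number $\det(g)^{-1}$, whose sign can be chosen freely.
\begin{itemize}
\item If $A$ is nilpotent, we get $A=0$ or a single Jordan block. These give $(\varepsilon_1,\varepsilon_2)=(0,0)$ or $(\pm1,0)$.
\item If $A$ is semisimple with real eigenvalues $\pm\lambda$, we get a normalized form with $(1,\pm1)$ or similar.
\item If $A$ has eigenvalues $\pm i\lambda$, we get a rotation form, leading to $(1,1)$ or $(-1,-1)$.
\end{itemize}
Matching these with the shape $Ae_1=\varepsilon_2e_2$, $Ae_2=-\varepsilon_1e_1$ of type I needs care about which signs can be absorbed by $\det(g)<0$.

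\emph{Case $w\neq 0$.} Then $\operatorname{tr}A=0$. We first normalize $w=-e_2$, which leaves a residual stabilizer (lower-triangular-type matrices with a constraint linking $\det g$ to the $e_2$-scaling). We then normalize $A$ under this smaller group. If $e_2$ is not an eigenvector of $A$, or more generally in the generic subcase, we aim for $Ae_1=\beta e_2$, $Ae_2=-\varepsilon e_1$ with $\varepsilon\in\{0,\pm1\}$; here $\beta$ survives as a continuous invariant because the residual group is too small to scale it away. If instead $A$ is diagonalizable with $e_2$ as an eigenvector, we obtain II(ii) after rescaling the eigenvalues to $\pm1$.

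The main obstacle is this last step: computing the residual stabilizer of $w$ precisely, and checking that its action separates the orbits exactly as in II(i) and II(ii), without missing a case or producing redundant normal forms. I would first do the explicit $2\times 2$ computation of how $A$'s matrix entries transform under the stabilizer, and only then read off the invariants.
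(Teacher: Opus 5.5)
Your first route is the paper's own: the paper gives no argument beyond quoting the Kuzmin--Zaidi table for right Bol algebras and rewriting it via $x\cdot y=-x*y$, $\langle z,x,y\rangle=-[\![x,y,z]\!]$. That dictionary is basis-free, so isomorphism classes correspond. Step 1 of your direct check is also correct, and stronger than you use. $\operatorname{tr}(A)A=0$ already forces $\operatorname{tr}A=0$, so the only condition is $\operatorname{tr}A=0$ with $w$ arbitrary. In particular, $A=0$ with $w\neq 0$ is II(i) with $\beta=\varepsilon=0$, not type I.

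The direct classification has a genuine gap in Step 2, exactly where signs and moduli are decided, which was the point of doing the check.

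\emph{The action formula is wrong.} If the new basis vectors are the columns of $g$, the action is $(A,w)\mapsto(\det(g)\,g^{-1}Ag,\ \det(g)\,g^{-1}w)$. Your formula is neither this nor its inverse convention: for $g=\lambda I$ the true action is $(\lambda^2A,\lambda w)$, or its inverse, while yours gives $(\lambda^{-2}A,\lambda^{-3}w)$. Used literally, it lets $g=\mathrm{diag}(d^{-2},d)$ fix $w=-e_2$ while sending $\beta\mapsto d^{-2}\beta$ when $Ae_1=\beta e_2$, $Ae_2=0$. That would wrongly normalize a genuine modulus.

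\emph{The sign of the rescaling is not free when $w=0$.} The scalar is $\det g$ itself. Let $Je_1=e_2$, $Je_2=-e_1$. The reflection $\mathrm{diag}(1,-1)$ conjugates $J$ to $-J$ and simultaneously multiplies by $-1$, so it fixes $J$. With a freely chosen sign you would get $(1,0)\sim(-1,0)$ and $(1,1)\sim(-1,-1)$, hence only four type I classes. So your bullets keeping both signs do not follow from your stated principle. Also, your real-eigenvalue bullet should give only $(1,-1)$, since $(1,1)$ is elliptic.

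\emph{The split for $w\neq0$ misfiles $Aw=0$.} A nilpotent or zero $A$ killing $w$ gives II(i) with $\varepsilon=0$, not II(ii).

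To close this, make the data intrinsic. Put $\phi:=\det(\cdot,w)\in T^*$ and $B(x,y):=\det(x,Ay)$; $B$ is symmetric precisely because $\operatorname{tr}A=0$. The planar identity $\det(y,v)x+\det(v,x)y+\det(x,y)v=0$ then gives $x*y=\phi(x)y-\phi(y)x$ and $[\![x,y,z]\!]=B(x,z)y-B(y,z)x$. So you are classifying arbitrary pairs $(\phi,B)\in T^*\times S^2T^*$ under the natural $GL(T)$-action.

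For $\phi=0$ this is Sylvester's law. The six signatures are exactly $B=\mathrm{diag}(\varepsilon_2,\varepsilon_1)$, i.e.\ type I, which is why the sign cannot be absorbed in the definite and rank-one cases.

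For $\phi\neq0$, normalize $\phi=-e_1^*$, which is equivalent to $e_1*e_2=-e_2$. The stabilizer is $e_1\mapsto e_1+se_2$, $e_2\mapsto te_2$ with $t\neq 0$. Writing $p=B(e_1,e_1)$, $r=B(e_1,e_2)$, $m=B(e_2,e_2)$, it acts by $p\mapsto p+2sr+s^2m$, $r\mapsto t(r+sm)$, $m\mapsto t^2m$. Note that $m=0$ exactly when $w$ is an eigenvector of $A$.
\begin{itemize}
\item If $m\neq0$, you get II(i) with $\varepsilon=\operatorname{sign}m$ and invariant $\beta=(pm-r^2)/m$.
\item If $m=0\neq r$, take $s=-p/(2r)$, $t=-1/r$ to get $(p,r,m)=(0,-1,0)$, which is II(ii).
\item If $m=r=0$, then $p$ is invariant, giving II(i) with $\varepsilon=0$, $\beta=p$.
\end{itemize}
This completes the independent verification of the transported list. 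It even shows the types are pairwise non-isomorphic, which the statement does not require.
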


Clearly the algebras of type I are real two-dimensional Lie triple systems. A classification of two-dimensional Lie triple systems over
$\mathbb{C}$ was given in \cite{Yam1}.\\
\\
\noindent
{\bf Example 2.4.}
 A {\it Maltsev algebra} \cite{Mal} is a pair
 $(M,*)$ consisting of a vector space $M$ and a binary operation $* : M \times M \rightarrow M$ such that
 \par
 $x*y = - y*x$,
 \par
 $xy*xz = (xy*z) *x + (yz*x)*x + (zx*x)*y$\\
 for all $x,y,z \in M$. First papers on algebraic theory of Maltsev algebras were \cite{Mal, Kl, Sag, Yam3, Yam4, Loo, Kuz} whereas semisimple Maltsev algebras were investigated in \cite{Eld, Loo, R1}. Connections between Maltsev algebras and ternary (resp. binary-ternary) algebras were found first in \cite{Loo} (resp. \cite{Yam3, Yam4}).
 \par
 If define on $(M,*)$ a ternary operation by
 \begin{equation}\label{eq2.1}
  \langle x,y,z \rangle = 2xy*z - yz*x - zx*y
 \end{equation}
$\forall x,y,z \in M$, then $(M, \langle , , \rangle)$ is a Lie triple system (\cite{Loo}) and from \cite{Mik} one knows that $(M, *, [\![, ,]\!])$ is a left Bol algebra (the Bol algebra {\it associated} with
$(M,*)$) where $[\![x,y,z]\!] := \frac{1}{3} \langle x,y,z \rangle $.
\\
\par
Given a Bol algebra $(T, *, [\![, ,]\!])$, let $b : T \times T \rightarrow \mathbb{R}$ be a symmetric bilinear form. For the triple $(T, *, [\![, ,]\!])$, we shall consider the following conditions:
\begin{equation}\label{eq2.2}
 b(x*y,z) = b(x, y*z),
\end{equation}
\begin{equation}\label{eq2.3}
 b([\![x,y,z]\!] ,u) = b([\![z,u,x]\!], y),
\end{equation}
\begin{equation}\label{eq2.4}
 b([\![x,y,z]\!] ,u) = -b(z,[\![x,y,u]\!])
\end{equation}
for all $u,x,y,z \in T$. Usually (\ref{eq2.2}) is called the {\it associative} condition on $(T,*)$ while (\ref{eq2.3}) (resp. (\ref{eq2.4})) is called the {\it right} (resp. {\it left}) {\it invariant} condition on $(T, [\![, ,]\!])$.
\par
In \cite{Wol} a {\it quadratic} Lie triple system was originally defined as a Lie triple system $(T, [\![, ,]\!])$ along with a symmetric bilinear form $b$ satisfying (\ref{eq2.3}) and (\ref{eq2.4}). Later on, a quadratic (i.e. metrisable) Lie triple system was defined in \cite{ZSZ} as a Lie triple system satisfying the right invariant condition (\ref{eq2.3}) (see also \cite{LWD}). From \cite{K1} and \cite{K3} it follows that the conditions (\ref{eq2.3}) and (\ref{eq2.4}) are actually equivalent (that (\ref{eq2.3}) implies (\ref{eq2.4}) was first observed in \cite{K1}; see also \cite{R2} for the case of Killing forms on Lie triple systems). For completeness and because of its importance in our setting, we give a detailed proof of this result as follows.
\begin{lemma}\label{lem2.2}
 In a Lie triple system $(T, [\![, ,]\!])$ the conditions (\ref{eq2.3}) and (\ref{eq2.4}) are equivalent.
\end{lemma}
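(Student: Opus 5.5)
The plan is to encode both conditions in the quadrilinear form
\[
f(x,y,z,u):=b([\![x,y,z]\!],u)
\]
and to compare the two conditions through the symmetries of $f$. By (T02), $f$ is always skew-symmetric in $(x,y)$, and by (T1) its cyclic sum over the first three arguments vanishes. Condition (\ref{eq2.3}) reads $f(x,y,z,u)=f(z,u,x,y)$. Since $b$ is symmetric, condition (\ref{eq2.4}) reads $f(x,y,z,u)=-f(x,y,u,z)$.

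For (\ref{eq2.3}) $\Rightarrow$ (\ref{eq2.4}) I would compute directly. By (\ref{eq2.3}) and (T02),
\[
b([\![x,y,z]\!],u)=b([\![z,u,x]\!],y)=-b([\![u,z,x]\!],y).
\]
On the other hand, by the symmetry of $b$ and (\ref{eq2.3}) applied with $z$ and $u$ interchanged,
\[
b(z,[\![x,y,u]\!])=b([\![x,y,u]\!],z)=b([\![u,z,x]\!],y).
\]
Comparing the two displays gives $b([\![x,y,z]\!],u)=-b(z,[\![x,y,u]\!])$, which is (\ref{eq2.4}). Only (T02) is needed here.

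For (\ref{eq2.4}) $\Rightarrow$ (\ref{eq2.3}), $f$ is now skew in $(x,y)$, skew in $(z,u)$, and satisfies the cyclic identity in its first three slots. These are exactly the algebraic symmetries of a Riemann curvature tensor, so I would reproduce the classical argument that they force the pair symmetry $f(x,y,z,u)=f(z,u,x,y)$. First, I would use the two skew-symmetries to derive the cyclic identity also in the last three slots, $f(x,y,z,u)+f(x,z,u,y)+f(x,u,y,z)=0$. Next, I would write the cyclic identity four times, starting respectively from $f(x,y,z,u)$, $f(y,z,u,x)$, $f(z,u,x,y)$ and $f(u,x,y,z)$, and combine them with alternating signs. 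Using the skew-symmetries, most terms cancel and one is left with $2f(x,y,z,u)-2f(z,u,x,y)=0$. Since we work over $\mathbb{R}$, this gives $f(x,y,z,u)=f(z,u,x,y)$, i.e.\ (\ref{eq2.3}).

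The main obstacle is the sign bookkeeping in this four-identity combination. In particular, one has to choose the slot of each cyclic identity so that the skew-symmetries in both pairs can be applied, which is where (\ref{eq2.4}) is used.
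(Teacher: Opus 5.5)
Your proposal is correct and follows the paper's proof. The forward direction is the same two-line computation using (\ref{eq2.3}) and (T02). For the converse, the paper likewise sets $K(x,y,z,u)=b([\![x,y,z]\!],u)$, records the two skew-symmetries and the cyclic identity, and obtains the pair symmetry $K(x,y,z,u)=K(z,u,x,y)$, which is exactly (\ref{eq2.3}); it cites Helgason (Ch.~I, Lemma 12.4) for this step rather than reproving it as you do.

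One bookkeeping note: for the four first-slot cyclic identities you list, the combination that works is their plain sum, not an alternating one. Via the skew-symmetries it collapses to $2f(z,x,y,u)+2f(u,y,z,x)=0$, which is the pair symmetry.
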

\noindent
 {\bf Proof.} First suppose (\ref{eq2.3}). We have
 \begin{align*}
 b([\![x,y,z]\!] ,u) &= b([\![z,u,x]\!], y) \; \mbox{(see (\ref{eq2.3}))} \\
 &= -b([\![u,z,x]\!] ,y) \\
 &= -b([\![x,y,u]\!], z) \; \mbox{(by (\ref{eq2.3}))} \\
 \end{align*}
so we get (\ref{eq2.4}) (see also \cite{ZSZ} and (\ref{eq3.3}) in \cite{K1}).
\par
Conversely, suppose (\ref{eq2.4}) and set $K(x,y,z,u) := b([\![x,y,z]\!] ,u)$. Then
\begin{equation}\label{eq2.5}
 K(x,y,z,u) = -K(y,x,z,u)
\end{equation}
and so, by (\ref{eq2.4}) and the symmetry of $b$,
\begin{equation}\label{eq2.6}
 K(x,y,z,u) = -K(x,y,u,z).
\end{equation}
From the other hand, (T1) implies
\begin{equation}\label{eq2.7}
 K(x,y,z,u) + K(y,z,x,u) + K(z,x,y,u) = 0.
\end{equation}
In view of (\ref{eq2.5})-(\ref{eq2.7}), Lemma 12.4 (Ch. I) in \cite{Hel} gives
\begin{equation}\label{eq2.8}
 K(x,y,z,u) = K(z,u,x,y).
\end{equation}
Therefore
\begin{align*}
 0 &= b([\![x,y,z]\!] ,u) + b(z,[\![x,y,u]\!]) \; \mbox{(see (\ref{eq2.4}))} \\
 &= b([\![z,u,x]\!] ,y) + b(z,[\![x,y,u]\!]) \; \mbox{(by (\ref{eq2.8}))} \\
 &= - b([\![u,z,x]\!] ,u) + b([\![x,y,u]\!], z)
\end{align*}
so we get (\ref{eq2.3}) (see also \cite{K3}, Proposition 1). \hfill $\Box$\\
\par
Observe that our expression of $K(x,y,z,u)$ here is different from that of \cite{K3} since we are working in Lie triple systems instead of Lie-Yamaguti algebras as in \cite{K3}. The condition (\ref{eq2.4}) is used in \cite{AB} when dealing with pseudo-Euclidean (i.e. quadratic) Lie triple systems and in \cite{I2} in the definition of a quadratic Lie-Yamaguti algebra, while (\ref{eq2.3}) is used in \cite{SZ} and \cite{K1}.
\par
A Maltsev algebra $(M,*)$ along with a symmetric bilinear form $b$ satisfying (\ref{eq2.2}) is called a {\it quadratic} Maltsev algebra. Recall that quadratic Maltsev algebras were formerly termed ``quasi-classical'' Maltsev algebras in \cite{Myu}.
\begin{lemma}\label{lem2.3}
 Let $(M,*,b)$ be a quadratic Maltsev algebra and let $(M, \langle , , \rangle, b)$ be the Lie triple system associated with $(M,*)$. Then $(M, \langle , , \rangle, b)$ is a quadratic Lie triple system.
\end{lemma}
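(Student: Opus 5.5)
The plan is to prove the \emph{left} invariant condition (\ref{eq2.4}) for $\langle , , \rangle$ and then invoke Lemma \ref{lem2.2} to obtain the right invariant condition (\ref{eq2.3}). This is legitimate because $(M,\langle , , \rangle)$ is already known to be a Lie triple system (\cite{Loo}). Whichever of the two conditions is taken as the definition of a quadratic Lie triple system, $(M,\langle , , \rangle,b)$ then satisfies it. Condition (\ref{eq2.4}) says exactly that, for fixed $x,y$, the operator $D(x,y): z\mapsto \langle x,y,z\rangle$ is skew-symmetric with respect to $b$. So the whole task reduces to showing that $D(x,y)$ is a combination of $b$-skew operators.

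First, for $a\in M$ let $L_a z := a*z$. By (\ref{eq2.2}), the anticommutativity of $*$ and the symmetry of $b$,
\[
b(a*z,u) = -b(z*a,u) = -b(z,a*u),
\]
so each $L_a$ is $b$-skew. Second, I rewrite (\ref{eq2.1}) using anticommutativity:
\[
-(y*z)*x = x*(y*z), \qquad -(z*x)*y = y*(z*x) = -y*(x*z).
\]
Hence
\[
\langle x,y,z\rangle = 2\,(x*y)*z + x*(y*z) - y*(x*z),
\]
that is, $D(x,y) = 2L_{x*y} + [L_x,L_y]$.

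Third, the commutator of two $b$-skew operators is again $b$-skew. Indeed, if $A^t=-A$ and $B^t=-B$ with respect to $b$, then $(AB-BA)^t = B^tA^t - A^tB^t = BA - AB$. Therefore $D(x,y)$ is $b$-skew, which gives
\[
b(\langle x,y,z\rangle,u) = -b(z,\langle x,y,u\rangle),
\]
i.e. (\ref{eq2.4}). Lemma \ref{lem2.2} then yields (\ref{eq2.3}) as well.

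The only step needing care is the sign bookkeeping in rewriting (\ref{eq2.1}) as $2L_{x*y}+[L_x,L_y]$. Notably, the Maltsev identity itself is never used beyond the cited fact that $\langle , , \rangle$ is a Lie triple system. The same argument applies verbatim to $[\![ , , ]\!] = \frac13\langle , , \rangle$, so the associated Bol algebra's ternary part is quadratic too.
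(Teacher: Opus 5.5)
Your proof is correct. The paper itself gives no argument for Lemma \ref{lem2.3}; it only refers to \cite{AB}, where pseudo-Euclidean Lie triple systems are defined through the left invariance (\ref{eq2.4}). Passing to (\ref{eq2.3}) then relies implicitly on Lemma \ref{lem2.2}, which is exactly how your last step uses it.

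Your route is self-contained. Rewriting $\langle x,y,z\rangle = 2(x*y)*z + x*(y*z) - y*(x*z)$ shows that $D(x,y)=2L_{x*y}+[L_x,L_y]$ lies in the Lie algebra of $b$-skew operators generated by left multiplications. This gives (\ref{eq2.4}) without expanding four-variable identities term by term, and your sign bookkeeping checks out.

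The operator view also makes clear what is actually used. Only anticommutativity and (\ref{eq2.2}) are needed to get (\ref{eq2.4}). The direction (\ref{eq2.4})$\Rightarrow$(\ref{eq2.3}) of Lemma \ref{lem2.2} needs only (T02) and (T1). Both hold for $\langle , , \rangle$ in any anticommutative algebra: in the cyclic sum each product appears with coefficient $2-1-1=0$. So the Maltsev identity is needed only for (T2), i.e.\ for $\langle , , \rangle$ to be a Lie triple system at all.

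One cosmetic point: writing $A^t$ presupposes that $b$-adjoints exist, which is true here since $b$ is nondegenerate. You can avoid the issue with the direct chain $b(ABz,u) = -b(Bz,Au) = b(z,BAu)$, which gives $b([A,B]z,u) = -b(z,[A,B]u)$ for any $b$-skew $A,B$.
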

\noindent
{\bf Proof.} See \cite{AB}. \hfill
$\Box$
\\
\par
In view of Lemma \ref{lem2.2} above, we give below a more accurate (in some sense) definition of a quadratic left Bol algebra.\\
\\
\noindent
{\bf Definition 2.5.}
A quadratic Bol algebra is a pair
$(T,b)$ consisting of a left Bol algebra $(T, *, [\![, ,]\!])$ and a nondegenerate symmetric bilinear form on $(T, *, [\![, ,]\!])$ satisfying (\ref{eq2.2}) and either of the equivalent conditions (\ref{eq2.3}) or (\ref{eq2.4}), in which case $b$ is said to be invariant on $T$.
\\
\par
An example of a symmetric bilinear form on $(T, *, [\![, ,]\!])$ that satisfies (\ref{eq2.2}) and (\ref{eq2.3}) is the one of the Killing form $\kappa$ on $(T, *, [\![, ,]\!])$ that has been defined in \cite{KZ} in a similar way as for Lie-Yamaguti algebras in \cite{K1}. Next the notion of an invariant (i.e. associative) bilinear form on a right Bol algebra is introduced (\cite{KZ}) as a generalization of $\kappa$. In \cite{KZ} the solvability and semisimplicity of the standard enveloping Lie algebra $\mathfrak{g}$ of $T$ (see \cite{Mik}) were characterized in terms of $\kappa$; moreover, the Killing radical ($K$-radical) of $T$ has been defined and it was proved that $T$ is $K$-solvable if and only if
$\mathfrak{g}$ is solvable, and $T$ is $\kappa$-semisimple if and only
$\kappa$ is nondegenerate.\\
\\
\noindent
{\bf Example 2.6.}
 Let $(M,*,b)$ be a quadratic Maltsev algebra. By Lemma \ref{lem2.3} above, the invariance of $b$ in $(M,*)$ induces the invariance of $b$ in the associated Lie triple system $(M, \langle , , \rangle)$ with $\langle , , \rangle$ defined as (\ref{eq2.1}). Therefore Remark 2.2 
 and Example 2.4
 imply that $(M,*,b)$ induces a quadratic structure on the associated Bol algebra $(M, *, [\![, ,]\!])$.
 \par
 As an application one may consider the so-called ``reductive'' Maltsev algebras over a field of characteristic zero (see \cite{Myu}). These algebras are known to be quadratic (i.e. quasi-classical) so, from the preceding discussion, their associated Bol algebras are quadratic.
 \par
 Another application is when consider a semisimple Maltsev algebra $M_0$ (\cite{R1, Eld}) along with its Killing form $\beta$. Then $\beta$ is invariant and nondegenerate (\cite{Sag}) so that $(M_0 , \beta)$ induces a quadratic Bol algebra.\\
\\
\noindent
{\bf Example 2.7.}
Let $A$ be an alternative algebra and $A^-$ its commutator algebra. Then $A^-$ is a Maltsev algebra (\cite{Mal}). If $A$ is semisimple over a field of characteristic $\neq 2$, then $A$ is quadratic by its Killing form $\kappa$ (\cite{Sch}) and a result from \cite{AB} says that $(A^- , \kappa)$ is also a quadratic Maltsev algebra. Therefore Example 
2.6 above implies that the Bol algebra associated with $A^-$ is quadratic.
\\
\par
The example and result below show that the only two-dimensional quadratic Bol algebras over
$\mathbb{R}$ are quadratic Lie triple systems.\\
\\
\noindent
{\bf Example 2.8.}
 Let $(T, [\![, ,]\!])$ be the two-dimensional Lie triple system over $\mathbb{R}$ with basis $\{e_1,e_2\}$ and multiplication table
 \par
 $[\![e_1,e_2 ,e_1]\!] = e_1$, $[\![e_1,e_2 ,e_2]\!] = -e_2$.\\
 Define on $T$ a bilinear symmetric form $b$ by setting
 \par
 $b(e_1,e_1)=0=b(e_2,e_2)$, $b(e_1,e_2) = \alpha \neq 0$.\\
 Then $(T,b)$ is quadratic.
\begin{theorem}\label{th2.4}
 There does not exist any quadratic structure on a real two-dimensional Bol algebra $(T, *, [\![, ,]\!])$ with nonzero binary operation.
\end{theorem}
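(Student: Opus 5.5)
The idea is to show that the associativity condition (\ref{eq2.2}) alone already forces every symmetric bilinear form to be degenerate once the binary operation is nonzero. The invariance conditions (\ref{eq2.3}) and (\ref{eq2.4}) on the ternary operation will not be needed.

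Let $(T, *, [\![, ,]\!])$ be a real two-dimensional Bol algebra with $* \neq 0$. By Theorem \ref{th2.1}, the algebras of type I have zero binary operation, so $T$ must be of type II (i) or II (ii). In both cases there is a canonical basis $\{e_1,e_2\}$ with $e_1*e_2=-e_2$. By (T01) this also gives $e_2*e_1=e_2$ and $e_1*e_1=e_2*e_2=0$. The ternary part, and hence the parameters $\beta,\varepsilon$, plays no role below, so types II (i) and II (ii) can be handled together.

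Let $b$ be a symmetric bilinear form on $T$ satisfying (\ref{eq2.2}), and write $b_{ij}=b(e_i,e_j)$. I evaluate (\ref{eq2.2}) on basis triples.
\begin{itemize}
\item With $(x,y,z)=(e_1,e_2,e_1)$: the left side is $b(e_1*e_2,e_1)=-b_{12}$, and the right side is $b(e_1,e_2*e_1)=b_{12}$. Hence $b_{12}=0$.
\item With $(x,y,z)=(e_1,e_2,e_2)$: the left side is $b(e_1*e_2,e_2)=-b_{22}$, and the right side is $b(e_1,e_2*e_2)=0$. Hence $b_{22}=0$.
\end{itemize}
Therefore $b(e_2,e_1)=b(e_2,e_2)=0$, so $e_2$ lies in the radical of $b$. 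Thus $b$ is degenerate and cannot be a quadratic structure in the sense of Definition 2.5.

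There is no real obstacle in this argument. The only point to check is that every two-dimensional real Bol algebra with nonzero binary operation falls under type II of Theorem \ref{th2.1}. This holds because all type I algebras satisfy $e_1*e_2=0$, and in dimension two the bracket $*$ is determined by $e_1*e_2$. Combined with Example 2.8, this shows that the only two-dimensional quadratic real Bol algebras are quadratic Lie triple systems.
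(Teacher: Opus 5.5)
Your proof is correct and is essentially the paper's argument. You restrict to type II of Theorem \ref{th2.1} and apply (\ref{eq2.2}) to the triples $(e_1,e_2,e_1)$ and $(e_1,e_2,e_2)$ to get $b(e_2,e_1)=b(e_2,e_2)=0$, so $e_2$ lies in the radical of $b$ and the ternary operation plays no role.
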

\noindent
{\bf Proof.}
 We must consider algebras of type II of Theorem \ref{th2.1}. We have
 \par
 $b(e_2,e_1) = b(-e_1 * e_2, e_1) = -b(e_1, e_2 * e_1) = -b(e_1, e_2) = -b(e_2,e_1)$\\
 so that $b(e_2, e_1) = 0$, and
 \par
 $b(e_2, e_2) = b(-e_1 * e_2, e_2) = - b(e_1, e_2 * e_2) = 0$.\\
 Thus $b(e_2, T) = 0$ and we conclude that $b$ is degenerate. \hfill
 $\Box$\\

\noindent
{\bf Definition 2.9.} (\cite{Mik, SM1}).
Let $(T, *, [\![, ,]\!])$ be a Bol algebra. A linear map $\Pi : T \rightarrow T$ is called a pseudoderivation with companion
 $\chi \in T$ if, for all $x,y,z \in T$,
 \par
 $\Pi (x*y) = \Pi (x) * y + x * \Pi (y) + [\![x,y,\chi]\!] - (x*y)*\chi$,
 \par
 $\Pi ([\![x,y,z]\!]) = [\![\Pi (x),y ,z]\!] + [\![x,\Pi (y) ,z]\!] + [\![x,y,\Pi (z)]\!]$. \\
\par
In \cite{Mik} it is proved that the set $pder (T)$ of all $\Pi$ as in the definition above is a vector space and, moreover, it is a Lie algebra. The set $\widetilde{pder(T)}$ of pairs $(\Pi, \chi)$ also constitutes a Lie algebra. If consider the map $L(u,v)$, $u,v \in T$, defined by $L(u,v)(w) := [\![u,v,w]\!]$, $\forall w \in T$, then by (T2) and (T3), $L(u,v)$ is a pseudoderivation with companion $u*v$. If denote by
$\mathfrak{h}$ the set of pairs
$(L(u,v), u*v)$ for all $u,v \in T$, then $\mathfrak{h}$ is a Lie subalgebra of $\widetilde{pder(T)}$. From the other hand, the direct sum $T \dot + \widetilde{pder(T)}$ is a Lie algebra (\cite{Mik}) so that $\mathfrak{g} := T \dot + \mathfrak{h}$ is a Lie subalgebra of $T \dot + \widetilde{pder(T)}$ that is called the standard enveloping Lie algebra of $T$.
\par
As for Lie-Yamaguti algebras in \cite{K2} and for Lie triple systems in \cite{ZSZ}, it could be shown that a nondegenerate invariant symmetric bilinear form $b$ on $T$ uniquely extends to a form $\beta$ on the standard enveloping Lie algebra $\mathfrak{g}$ of $T$. Indeed define on $\mathfrak{g}$ a form $\beta$ as follows:
\par
$\beta |_T := b$,
\par
$\beta (T,\mathfrak{h}) = 0 = \beta (\mathfrak{h}, T)$,
\par
$\beta (L(u,v),L(x,y)) := b(L(u,v)(x),y)$,\\
for all $u,v,x,y \in T$ and $\beta$ extends to whole $\mathfrak{h}$ by bilinearity of $b$. We have
\begin{align*}
\beta (L(u,v),L(x,y)) &= b([\![u,v,x]\!],y)\\
&= b(u, [\![y,x,v]\!]) \; \mbox{(by (\ref{eq2.3}))}\\
&= - b(\alpha (x,y)(v),u)\\
&= - \beta (L(x,y), L(v,u))\\
&= \beta (L(x,y), L(u,v))
\end{align*}
so $\beta$ is symmetric. Next, as in \cite{K2} (proof of the Theorem) or in \cite{ZSZ} (Lemma 3.2, Lemma 3.3 and Theorem 3.4), one proves that
$\beta$ is well-defined, invariant on $\mathfrak{g}$ and $\beta$ is nondegenerate if and only if $b$ is nondegenerate.\\

\noindent
{\bf Definition 2.10.}
Let $(T,b)$ be a quadratic Bol algebra. An element $f \in End(T)$ is said to be $b$-symmetric (resp. $b$-skew-symmetric) if $b(f(x),y) = b(x,f(y))$ (resp. $b(f(x),y) =- b(x,f(y))$) for all $x,y \in T$.
\begin{proposition}\label{prop2.5}
 Let $(T,b)$ be a quadratic Bol algebra. For a $(\Pi, \chi) \in \widetilde{pder(T)}$ define the map $\phi (\Pi) : T \times T \rightarrow \mathbb{R}$ by
 \begin{equation}\label{eq2.9}
 \phi (\Pi) (x,y) = b(\Pi (x),y) + b(x,\Pi (y))
 \end{equation}
 for all $x,y \in T$. Then $\phi (\Pi)$ is a symmetric bilinear form and $\phi (\Pi)$ is invariant on $T$ (specifically $\phi (\Pi)$ satisfies {\rm (\ref{eq2.2})}) if and only if the map ${\mathcal R}_{\chi}(w) := R(w,\chi) - r(\chi)r(w)$ is $b$-skew-symmetric for all $w \in T$, where $r(x)(y) := y*x$ and $R(x,y)(z) := [\![z,x,y]\!]$ for all $x,y,z \in T$.
\end{proposition}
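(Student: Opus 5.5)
We establish the two claims separately: symmetry of $\phi(\Pi)$, then the equivalence. Symmetry and bilinearity are immediate from (\ref{eq2.9}) and the symmetry of $b$, since swapping $x$ and $y$ just interchanges the two summands. The real content is the equivalence. The plan is to compute the defect
\[
D(x,y,z):=\phi(\Pi)(x*y,z)-\phi(\Pi)(x,y*z)
\]
directly and show that
\[
D(x,y,z)=b({\mathcal R}_\chi(y)x,z)+b(x,{\mathcal R}_\chi(y)z).
\]
Once this identity holds, $\phi(\Pi)$ satisfies (\ref{eq2.2}) for all $x,y,z$ exactly when every ${\mathcal R}_\chi(y)$ is $b$-skew-symmetric.

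First I expand both terms of $D$. For $\Pi(x*y)$ I use the first identity of Definition 2.9, and likewise for $\Pi(y*z)$. This gives
\[
\phi(\Pi)(x*y,z)=b(\Pi x*y,z)+b(x*\Pi y,z)+b([\![x,y,\chi]\!],z)-b((x*y)*\chi,z)+b(x*y,\Pi z),
\]
\[
\phi(\Pi)(x,y*z)=b(\Pi x,y*z)+b(x,\Pi y*z)+b(x,y*\Pi z)+b(x,[\![y,z,\chi]\!])-b(x,(y*z)*\chi).
\]
Since $b$ itself satisfies (\ref{eq2.2}), the three terms in each line containing $\Pi$ cancel pairwise. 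What remains is
\[
D=b([\![x,y,\chi]\!],z)-b(x,[\![y,z,\chi]\!])-b((x*y)*\chi,z)+b(x,(y*z)*\chi).
\]

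Next I rewrite these four remaining terms through the operators $R$ and $r$ of the statement.
\begin{itemize}
\item By definition $[\![x,y,\chi]\!]=R(y,\chi)x$.
\item By (T02), $[\![y,z,\chi]\!]=-[\![z,y,\chi]\!]=-R(y,\chi)z$.
\item $(x*y)*\chi=r(\chi)r(y)x$.
\item By (T01), $(y*z)*\chi=-(z*y)*\chi=-r(\chi)r(y)z$.
\end{itemize}
Substituting gives
\[
D=b(R(y,\chi)x,z)+b(x,R(y,\chi)z)-b(r(\chi)r(y)x,z)-b(x,r(\chi)r(y)z),
\]
which is precisely $b({\mathcal R}_\chi(y)x,z)+b(x,{\mathcal R}_\chi(y)z)$. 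Hence $D\equiv0$ if and only if ${\mathcal R}_\chi(w)$ is $b$-skew-symmetric for every $w=y\in T$.

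The main obstacle is bookkeeping the signs so that the triple-product terms and the binary terms assemble into the same operator ${\mathcal R}_\chi(y)$. In particular, using (T01) and (T02) to move $y$ into the first slot of $R(\cdot,\chi)$ and $r(\cdot)$ is what produces skew-symmetry, rather than symmetry, of ${\mathcal R}_\chi(y)$. No invariance of $[\![,,]\!]$ (Lemma \ref{lem2.2}) is needed; only (\ref{eq2.2}) for $b$ is used. I would also note that the statement's parenthetical ``invariant'' means (\ref{eq2.2}), which is exactly what this computation addresses.
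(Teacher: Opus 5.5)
Your computation for the binary condition is correct and is essentially the paper's argument. The paper compares $\phi(\Pi)(x*y,z)$ with $-\phi(\Pi)(y,x*z)$ rather than with $\phi(\Pi)(x,y*z)$, but after (T01) this is the same check. Your pairwise cancellation of the $\Pi$-terms via (\ref{eq2.2}) is correct, and so are the four sign rewritings into $R(y,\chi)$ and $r(\chi)r(y)$.

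What is missing is the ternary half of ``invariant''. In Definition 2.5, an invariant form is one satisfying (\ref{eq2.2}) together with (\ref{eq2.3}) or (\ref{eq2.4}). The paper's proof first shows that $\phi(\Pi)$ satisfies (\ref{eq2.4}) for every pseudoderivation, unconditionally. The parenthetical in the statement says that the condition on $\mathcal{R}_\chi$ governs only (\ref{eq2.2}); it does not say the ternary condition can be ignored.

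Without that step you have the ``only if'' direction. For the ``if'' direction, however, skew-symmetry of the $\mathcal{R}_\chi(w)$ gives you only (\ref{eq2.2}) for $\phi(\Pi)$, not invariance. Likewise, your remark that no invariance of $[\![, ,]\!]$ is needed holds only for the binary part.

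The missing step is short. The second identity in Definition 2.9 has no companion term, so $\Pi$ is a genuine derivation of the triple product. Expanding
\[
\phi(\Pi)([\![x,y,z]\!],u)+\phi(\Pi)(z,[\![x,y,u]\!])
\]
gives eight terms that cancel in pairs by (\ref{eq2.4}) for $b$. For instance, $b([\![x,y,z]\!],\Pi(u))+b(z,[\![x,y,\Pi(u)]\!])=0$ and $b([\![x,y,\Pi(z)]\!],u)+b(\Pi(z),[\![x,y,u]\!])=0$, and the $\Pi(x)$ and $\Pi(y)$ terms cancel in the same way. Hence $\phi(\Pi)$ always satisfies (\ref{eq2.4}), equivalently (\ref{eq2.3}) by Lemma \ref{lem2.2}. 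With this added, your argument proves the full statement.
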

\noindent
{\bf Proof.}
Since $\Pi$ is linear and $b$ is symmetric and bilinear, it follows that $\phi (\Pi)$ is symmetric and bilinear.
\par
For any $x,y,z,u \in T$ we have
\begin{align*}
 \phi (\Pi)([\![x,y,z]\!],u) &= b(\Pi ([\![x,y,z]\!]),u) + b([\![x,y,z]\!], \Pi (u)) \\
 &= b([\![\Pi (x),y,z]\!],u) + b([\![x,\Pi (y),z]\!],u) \\
 &+ b([\![x,y,\Pi (z)]\!],u) - b(z,[\![x,y,\Pi (u)]\!])
\end{align*}
while
\begin{align*}
 - \phi (\Pi) (z, [\![x,y,u]\!]) &:= -b(\Pi (z), [\![x,y,u]\!]) - b(z, \Pi ([\![x,y,u]\!])) \\
 &= b(u, [\![x,y,\Pi (z)]\!]) - b(z, [\![\Pi (x),y,u]\!])\\
 &- b(z, [\![x,\Pi (y),u]\!]) - b(z, [\![x,y,\Pi (u)]\!])\\
 &\mbox{(by (\ref{eq2.4}) and the fact that $b$ is symmetric and $\Pi$ is}\\
 &\mbox{a pseudoderivation)}\\
 &=b(u, [\![x,y,\Pi (z)]\!]) + b([\![\Pi (x),y,z]\!], u)\\
 &+ b([\![x,\Pi (y),z]\!], u) - b(z, [\![x,y,\Pi (u)]\!]) \; \mbox{(by (\ref{eq2.4}))}\\
 &= b([\![\Pi (x),y,z]\!], u) + b([\![x,\Pi (y),z]\!], u)\\
 &+ b([\![x,y,\Pi (z)]\!], u)- b(z, [\![x,y,\Pi (u)]\!])
\end{align*}
and so $\phi (\Pi)$ satisfies (\ref{eq2.4}).
\par
For any $x,y,z \in T$, using (\ref{eq2.9}), (\ref{eq2.2}) (\ref{eq2.3}) and the fact that $\Pi$ is a pseudoderivation, we have
\par
$\phi (\Pi)(x*y,z) = b(\Pi (x)*y,z) + b(x*\Pi (y),z)$
\par
\hspace{3.0cm} $+ b([\![x,y,\chi]\!],z) - b(xy*\chi, z) - b(y, x*\Pi(z))$\\
and
\par
$- \phi (\Pi)(y,x*z) = b(z,x*\Pi (y)) - b(y,\Pi (x)*z)$
\par
\hspace{3.0cm} $-b(y,x*\Pi (z))- b(y,[\![x,z,\chi]\!]) + b(y,xz*\chi)$ \\
so that $\phi (\Pi)$ satisfies (\ref{eq2.2}) if and only if
\par
$b([\![x,y,\chi]\!]- xy*\chi, z) = - b(y,[\![x,z,\chi]\!] - xz*\chi)$\\
i.e.
\par
$b({\mathcal R}_{\chi}(x)(z),y) = -b({\mathcal R}_{\chi}(x)(y),z)$.\\
This completes the proof. \hfill
 $\Box$\\
\par
From the proof of the preceding result, the following corollary becomes obvious.
\begin{corollary}\label{cor2.6}
 Let $(T, [\![, ,]\!], b)$ be a quadratic Lie triple system. For a derivation $D$ of $(T,[\![, ,]\!])$, define the map $\phi (D) :T \times T \rightarrow \mathbb{R}$ by $\phi (D)(x,y) = b(D(x),y) + b(x,D(y))$ for all $x,y \in T$. Then $\phi (D)$ is a bilinear, symmetric and invariant form. \hfill $\Box$
\end{corollary}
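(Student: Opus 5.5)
The plan is to obtain Corollary~\ref{cor2.6} by reading the proof of Proposition~\ref{prop2.5} with the binary operation switched off. A quadratic Lie triple system $(T,[\![, ,]\!],b)$ can be viewed as a Bol algebra with $x*y=0$ (Remark 2.2). With $*$ trivial, (T01) and (T3) hold trivially: (T3) reduces to $0=0$. Likewise (\ref{eq2.2}) holds trivially for $b$. In this setting a derivation $D$ of $(T,[\![, ,]\!])$ is precisely a pseudoderivation with companion $\chi=0$. Its first defining identity reads $D(0)=0+0+[\![x,y,0]\!]-0$, which is automatic, and its second is exactly the derivation property. So $(D,0)\in\widetilde{pder(T)}$.

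First, symmetry and bilinearity of $\phi(D)$ follow immediately from the linearity of $D$ and the symmetry and bilinearity of $b$. No Bol structure is needed for this step.

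Second, I would repeat verbatim the first computation in the proof of Proposition~\ref{prop2.5}. That computation expands $\phi(D)([\![x,y,z]\!],u)$ and $-\phi(D)(z,[\![x,y,u]\!])$ using only the derivation rule for the ternary bracket, the symmetry of $b$, and (\ref{eq2.4}) for $b$. It shows that both sides equal
\[
b([\![D(x),y,z]\!],u)+b([\![x,D(y),z]\!],u)+b([\![x,y,D(z)]\!],u)-b(z,[\![x,y,D(u)]\!]).
\]
Hence $\phi(D)$ satisfies (\ref{eq2.4}), and by Lemma~\ref{lem2.2} it also satisfies (\ref{eq2.3}). Since the Lie triple system is quadratic, (\ref{eq2.4}) is available for $b$, again by Lemma~\ref{lem2.2}.

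Third, I would note that the associativity condition (\ref{eq2.2}) for $\phi(D)$ is vacuous because $*=0$. Equivalently, in Proposition~\ref{prop2.5} the operator ${\mathcal R}_\chi(w)$ with $\chi=0$ and $r=0$ is the zero map, which is trivially $b$-skew-symmetric. Therefore $\phi(D)$ is invariant.

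I do not expect a real obstacle. The only point needing care is the sign bookkeeping in the expansion of $-\phi(D)(z,[\![x,y,u]\!])$: each term must be moved across $b$ using (\ref{eq2.4}) exactly once, and this is already done in the proof of Proposition~\ref{prop2.5}. The corollary does not assert nondegeneracy of $\phi(D)$, so no argument is needed for that.
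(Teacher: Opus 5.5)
Your proposal is correct and follows essentially the paper's own route: the paper calls the corollary ``obvious'' from the proof of Proposition~\ref{prop2.5}, and its first computation there uses only the derivation rule on the ternary bracket, the symmetry of $b$, and (\ref{eq2.4}), while the associativity condition (\ref{eq2.2}) is vacuous for a Lie triple system. Your additional points are harmless and correct: viewing $D$ as a pseudoderivation with companion $0$, and invoking Lemma~\ref{lem2.2} (which needs no nondegeneracy) to pass from (\ref{eq2.4}) to (\ref{eq2.3}) for $\phi(D)$.
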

In $\widetilde{pder(T)}$ denote by
$\widetilde{pder_a (T)}$ the subspace spanned by all $b$-skew-symmetric pseudoderivations of $(T,b)$ and by
$\mathfrak{B}_s (T)$ the vector space spanned by all symmetric bilinear forms on $T$. Then, as vector spaces, $\widetilde{pder(T)}$ is (isomorphic to) a direct sum of
$\widetilde{pder_a (T)}$ and a subspace in $\mathfrak{B}_s (T)$. Specifically, the following result holds.
\begin{proposition}\label{prop2.7}
 Let $(T,b)$ be a quadratic Bol algebra. Then the short sequence
 \par
 $0 \longrightarrow \widetilde{pder_a (T)}\stackrel{i}{\longrightarrow} \widetilde{pder (T)} \stackrel{\phi}{\longrightarrow} Im \phi \longrightarrow 0$\\
 is exact, where $i$ denotes the inclusion map and $\phi (\Pi)$ is defined by (\ref{eq2.9}) for all
 $(\Pi , \chi) \in \widetilde{pder (T)}$.
\end{proposition}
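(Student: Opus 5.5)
We check exactness at each of the three places. The map $\phi$ is regarded as a map $(\Pi,\chi)\mapsto \phi(\Pi)$ from $\widetilde{pder(T)}$ into $\mathfrak{B}_s(T)$. By Proposition \ref{prop2.5}, $\phi(\Pi)$ is a symmetric bilinear form for every $(\Pi,\chi)$, so $\phi$ is well defined.

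First we check that $\phi$ is linear. From (\ref{eq2.9}) and bilinearity of $b$,
\[
\phi(\lambda\Pi_1+\mu\Pi_2)=\lambda\phi(\Pi_1)+\mu\phi(\Pi_2)
\]
for all $(\Pi_1,\chi_1),(\Pi_2,\chi_2)\in\widetilde{pder(T)}$ and $\lambda,\mu\in\mathbb{R}$. Here we use that $\widetilde{pder(T)}$ is a vector space with componentwise operations, as recalled after Definition 2.9 from \cite{Mik}.

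Exactness at $Im\,\phi$ is then immediate, since $\phi$ maps onto its image by definition. Exactness at $\widetilde{pder_a(T)}$ says that $i$ is injective, which holds because $i$ is an inclusion of a subspace.

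The only substantive point is exactness in the middle, that is, $\ker\phi = i(\widetilde{pder_a(T)})$.
\begin{itemize}
\item[($\supseteq$)] If $(\Pi,\chi)$ has $\Pi$ $b$-skew-symmetric in the sense of Definition 2.10, then $\phi(\Pi)(x,y)=b(\Pi(x),y)+b(x,\Pi(y))=0$ for all $x,y$, so $\phi\circ i=0$. By linearity this extends to the span of such elements.
\item[($\subseteq$)] If $\phi(\Pi)=0$, then $b(\Pi(x),y)=-b(x,\Pi(y))$ for all $x,y\in T$. Hence $\Pi$ is $b$-skew-symmetric, and $(\Pi,\chi)$ already lies in $\widetilde{pder_a(T)}$.
\end{itemize}

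The main point requiring care is the interpretation of $\widetilde{pder_a(T)}$: it should be the subspace of pairs $(\Pi,\chi)$ whose first component $\Pi$ is $b$-skew-symmetric. The set of such pairs is closed under linear combinations, since $b$-skew-symmetry is a linear condition on $\Pi$, so the "span" in the definition coincides with this set. Once this is noted, the kernel computation is exactly the equivalence $\phi(\Pi)=0 \Leftrightarrow \Pi$ is $b$-skew-symmetric. Nondegeneracy of $b$ is not even needed here. It would only be needed to identify $\phi(\Pi)$ with the $b$-symmetric part $\Pi+\Pi^{*}$ of $\Pi$, which we do not require.
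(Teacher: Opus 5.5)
Your proposal is correct and follows the same route as the paper, whose proof just says the exactness follows from (\ref{eq2.9}) and the definition of a $b$-skew-symmetric pseudoderivation; you have simply spelled this out. Your remarks are also correct: $\widetilde{pder_a(T)}$ should be read as the set of pairs $(\Pi,\chi)$ with $\Pi$ $b$-skew-symmetric, this set is already a subspace, and nondegeneracy of $b$ plays no role.
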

\noindent
{\bf Proof.}
A proof follows from (\ref{eq2.9}) and the definition of a $b$-skew-symmetric pseudoderivation. \hfill
$\Box$\\

\noindent
{\bf Remark 2.11.}
If $(T,b)$ reduces to a quadratic Lie triple system $(T, [\![, ,]\!], b)$, then Proposition \ref{prop2.7} implies the following exact sequence
\par
$0 \longrightarrow {Der_a (T)}\stackrel{i}{\longrightarrow} {Der (T)} \stackrel{\phi}{\longrightarrow} Im \phi \longrightarrow 0$,\\
where $\phi(D)(x,y) = b(D(x),y) + b(x,D(y))$ for all $D \in Der(T)$, so that $Der(T) \cong Der_a (T) \oplus Im \phi$. Observe that, in this case, $\phi (D)(x,y)$ is invariant by Corollary \ref{cor2.6}.

\section{Dual Representations}

\noindent
In this section, given the adjoint representation for a Bol algebra $T$, we define its dual in case when a quadratic structure is defined on $T$. For a not necessarily quadratic Bol algebra $T$, our triple of dual maps does not constitute a representation in general as defined in \cite{I1} unless some conditions are fulfilled, although it is still a representation for the underlying Lie triple system.
\par
In \cite{I1} a representation for a Bol algebra $(T, *, [\![, ,]\!])$ was defined as a quadruple $(V, \rho, \theta, D_{\theta})$, where $V$ is a vector space, $\rho : T \rightarrow End(V)$ a linear map, $\theta , D_{\theta} : T \times T \rightarrow End(V)$ bilinear maps satisfying
\par
(R1) $D_{\theta}(x,y) + \theta (x,y) - \theta (y,x) = 0$,
\par
(R21) $[D_{\theta}(x,y), \rho (z)] = \rho ([\![x,y,z]\!]) + \rho (x*y) \rho (z) - \theta (z, x*y)$,
\par
(R22) $\theta (x,y*z) = \rho (y) \theta (x,z) - \rho (z) \theta (x,y) - (D_{\theta} (y,z) - \rho (y*z)) \rho (x)$,
\par
(R31) $[D_{\theta} (u,v),D_{\theta} (x,y)] = D_{\theta} ([\![u,v,x]\!],y)+ D_{\theta} (x, [\![u,v,y]\!])$,
\par
(R32) $[D_{\theta} (u,v),\theta (x,y)] = \theta ([\![u,v,x]\!],y)+ \theta (x, [\![u,v,y]\!])$,
\par
(R33) $\theta (u, [\![x,y,z]\!]) = \theta (y,z) \theta (u,x) - \theta (x,z) \theta (u,y)+ D_{\theta}(x,y)\theta (u,z)$\\
for all $u,v,x,y,z \in T$.
\par
Because of (R1) we can write $(V,\rho , \theta)$ in place of $(V, \rho, \theta, D_{\theta})$. Observe that, if set $l(u)(v) := u*v$, $R(u,v)(w) := [\![w,u,v]\!]$ and $L(u,v)(w) := [\![u,v,w]\!]$ for all $u,v,w \in T$, then the maps $l : T \rightarrow End(T)$, $u \mapsto l(u)$, $R : T \times T \rightarrow End(T)$, $(u,v) \mapsto R(u,v)$, and $L : T \times T \rightarrow End(T)$, $(u,v) \mapsto L(u,v)$ define a representation $(T,l,R,L)$ of $(T, *, [\![, ,]\!])$ that is called the {\it adjoint representation} of $(T, *, [\![, ,]\!])$. The map $r(u)$ is defined as $r(u)(v) := v*u$, $u,v \in T$; see also Proposition \ref{prop2.5}.
\par
Now let $V^*$ be the dual space of $V$ and define a linear map ${\rho}^* : T \rightarrow End(V^*)$ and bilinear maps ${\theta}^*, D^*_{\theta} : T \times T \rightarrow End(V^*)$ by
\begin{equation}\label{eq3.1}
{\rho}^* (x)(f) := -f \rho (x),
\end{equation}
\begin{equation}\label{eq3.2}
{\theta}^* (x,y)(f) := -f \theta (x,y),
\end{equation}
\begin{equation}\label{eq3.3}
D^*_{\theta} (x,y)(f) := -f D_{\theta} (x,y).
\end{equation}
Next, as in \cite{SZ}, define the switching operator $\iota : {\otimes}^2 T \rightarrow T$ by $\iota (x \otimes y) := y \otimes x$, $\forall x \otimes y \in {\otimes}^2 T$. Then one observes that $D^*_{\theta} (x,y)= D_{-{\theta}^* \iota} (x,y)$, $\forall x,y \in T$.
\begin{lemma}\label{lem3.1}
 Let $(V, \theta, D_{\theta})$ be a representation of the Lie triple system $(T, [\![, ,]\!])$. Then
 $(V^* , -{\theta}^* \iota)$ is a representation of
 $(T, [\![, ,]\!])$.
\end{lemma}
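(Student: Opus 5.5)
The plan is a direct verification. For a Lie triple system, a representation $(V,\theta,D_\theta)$ has $D_\theta$ determined by (R1), $D_\theta(x,y)=\theta(y,x)-\theta(x,y)$, and must satisfy (R31), (R32) and (R33); there is no $\rho$. I set $\theta'(x,y):=-\theta^*(y,x)$, that is, $\theta'(x,y)(f)=f\,\theta(y,x)$ for $f\in V^*$.

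First I identify the companion map of $\theta'$. From (R1),
\begin{equation*}
D_{\theta'}(x,y)f=\theta'(y,x)f-\theta'(x,y)f=f\theta(x,y)-f\theta(y,x)=-fD_\theta(x,y)=D^*_\theta(x,y)f .
\end{equation*}
This is the observation $D^*_\theta=D_{-\theta^*\iota}$ made before the lemma, so (R1) holds for the new triple. The basic mechanism in all later steps is that dualization reverses composition: $(\theta_1^*\theta_2^*)(f)=f\,\theta_2\theta_1$.

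Next I check (R31) and (R32) by direct computation. For (R32), I evaluate on $f$:
\begin{equation*}
[D^*_\theta(u,v),\theta'(x,y)]f=f\,[D_\theta(u,v),\theta(y,x)].
\end{equation*}
By (R32) for $\theta$, the right-hand side equals
\begin{equation*}
f\theta([\![u,v,y]\!],x)+f\theta(y,[\![u,v,x]\!])=\theta'(x,[\![u,v,y]\!])f+\theta'([\![u,v,x]\!],y)f ,
\end{equation*}
which is (R32) for $\theta'$. Condition (R31) for $D^*_\theta$ goes the same way: the commutator $[D^*_\theta(u,v),D^*_\theta(x,y)]$ becomes $-f[D_\theta(u,v),D_\theta(x,y)]$, and then (R31) for $D_\theta$ applies. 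Alternatively, (R31) follows from (R32) and (R1).

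The main obstacle is (R33), because the reversal of composition turns it into a different identity for $\theta$. Evaluating both sides of (R33) for $\theta'$ on $f$, it becomes
\begin{equation*}
\theta([\![x,y,z]\!],u)=\theta(x,u)\theta(z,y)-\theta(y,u)\theta(z,x)-\theta(z,u)D_\theta(x,y).
\end{equation*}
This is the standard companion identity for representations of Lie triple systems, and I derive it from the axioms in three steps:
\begin{itemize}
\item By (R32), $\theta([\![x,y,z]\!],u)=D_\theta(x,y)\theta(z,u)-\theta(z,u)D_\theta(x,y)-\theta(z,[\![x,y,u]\!])$.
\item By (R33), $\theta(z,[\![x,y,u]\!])=\theta(y,u)\theta(z,x)-\theta(x,u)\theta(z,y)+D_\theta(x,y)\theta(z,u)$.
\item Substituting the second expression into the first, the two $D_\theta(x,y)\theta(z,u)$ terms cancel, leaving exactly the required identity.
\end{itemize}
This completes the verification that $(V^*,-\theta^*\iota)$ is a representation of $(T,[\![,,]\!])$.
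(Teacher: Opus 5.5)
Your proposal is correct and follows the paper's own route. The paper's proof simply states that (R1) and (R31)--(R33) for $(V^*,-\theta^*\iota)$ follow by direct computation from those for $(V,\theta,D_\theta)$. You carry that computation out explicitly, and your reduction of (R33) for the dual to the identity $\theta([\![x,y,z]\!],u)=\theta(x,u)\theta(z,y)-\theta(y,u)\theta(z,x)-\theta(z,u)D_\theta(x,y)$, which you derive from (R32) and (R33), checks out.
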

\noindent
{\bf Proof.} A proof follows by a straightforward computation of (R1), (R31)-(R33) using the fact that $(V, \theta, D_{\theta})$ is a representation of $(T, [\![, ,]\!])$. See also section 3 in \cite{LWD} or Proposition 2.12 in \cite{SZ} in case of Lie-Yamaguti algebras. \hfill
$\Box$
\begin{theorem}\label{th3.2}
Let $(T,l,R,L)$ be the adjoint representation of a quadratic Bol algebra $(T,b)$. Then $(T^* , l^* ,-R^* \iota, L_{-R^* \iota})$ is a representation of $(T,b)$ (the dual of the adjoint representation
$(T,l,R,L)$ of $(T,b)$ that is called the coadjoint representation of
$(T,b)$).
\end{theorem}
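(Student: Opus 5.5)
The plan is to show that, for a quadratic Bol algebra, the coadjoint quadruple is \emph{isomorphic} to the adjoint representation $(T,l,R,L)$. It will then satisfy (R1)--(R33) automatically, because these identities are preserved when all operators are conjugated by a fixed linear isomorphism.

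Since $b$ is nondegenerate, the map $\varphi : T \rightarrow T^*$, $\varphi(a) := b(a,\cdot)$, is a linear isomorphism. We check, operator by operator, that $\varphi$ intertwines the adjoint maps with the dual ones.

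\emph{The binary part.} For $a,w,x \in T$ we have
\[
l^*(x)(\varphi(a))(w) = -b(a, x*w).
\]
By (T01) and the associativity condition (\ref{eq2.2}),
\[
-b(a,x*w) = -b(a*x,w) = b(x*a,w) = \varphi(l(x)a)(w),
\]
so $l^*(x)\varphi = \varphi\, l(x)$.

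\emph{The ternary part.} We have
\[
(-R^*\iota)(x,y)(\varphi(a))(w) = b(a, R(y,x)w) = b([\![w,y,x]\!],a) = K(w,y,x,a),
\]
with $K(x,y,z,u) := b([\![x,y,z]\!],u)$ as in the proof of Lemma \ref{lem2.2}. By that lemma, $b$ satisfies both (\ref{eq2.3}) and (\ref{eq2.4}), so $K$ enjoys the symmetries (\ref{eq2.5}), (\ref{eq2.6}) and (\ref{eq2.8}). Applying them in turn,
\[
K(w,y,x,a) = K(x,a,w,y) = -K(a,x,w,y) = K(a,x,y,w) = b(R(x,y)a,w).
\]
Hence $(-R^*\iota)(x,y)\varphi = \varphi R(x,y)$.

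\emph{The map $D$.} By (R1), $L_{-R^*\iota}(x,y) = (-R^*\iota)(y,x) - (-R^*\iota)(x,y)$. Conjugating by $\varphi$ gives $R(y,x) - R(x,y)$. This equals $L(x,y)$ by (T1) and (T02), i.e.\ by (R1) for the adjoint representation. Alternatively, one can use the identity $D^*_\theta = D_{-\theta^*\iota}$ noted before Lemma \ref{lem3.1}.

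\emph{Conclusion.} The identities above say precisely that
\[
l^*(x) = \varphi\, l(x)\, \varphi^{-1}, \qquad -R^*\iota(x,y) = \varphi R(x,y)\varphi^{-1}, \qquad L_{-R^*\iota}(x,y) = \varphi L(x,y)\varphi^{-1}.
\]
Each of (R1)--(R33) is a linear or quadratic identity in these operators, built from compositions and commutators. Such identities are invariant under simultaneous conjugation by $\varphi$. Since $(T,l,R,L)$ is a representation, so is $(T^*,l^*,-R^*\iota,L_{-R^*\iota})$. As a consistency check, the purely ternary axioms (R1), (R31)--(R33) also follow directly from Lemma \ref{lem3.1}.

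The main obstacle I expect is sign bookkeeping in the ternary step: $\iota$ swaps the arguments, and the order of the four slots in $K$ must be tracked carefully. This is exactly where the equivalence of (\ref{eq2.3}) and (\ref{eq2.4}) from Lemma \ref{lem2.2}, together with the pair symmetry (\ref{eq2.8}), is needed. Nondegeneracy of $b$ is used only to make $\varphi$ invertible; without it, one would have to verify (R21) and (R22) directly, and these mix $\rho^*$ with $\theta^*$ and generally fail.
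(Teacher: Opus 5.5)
Your proof is correct, but it runs in the opposite direction from the paper's.

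\emph{The paper's route.} The paper cites Lemma \ref{lem3.1} for (R1) and (R31)--(R33). These hold for the dual of any Lie triple system representation, with no use of $b$. It then verifies (R21) and (R22) by direct computation. It pairs each adjoint identity against $u$ via $b$, moves operators across using (\ref{eq2.2})--(\ref{eq2.4}), and invokes nondegeneracy to read off the dual identity. The intertwining by $b^\sharp$ (your $\varphi$) appears only afterwards, as Proposition \ref{prop3.3}.

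\emph{Your route.} You establish the intertwining first and then obtain all of (R1)--(R33) at once by transport of structure.

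\emph{What each buys.} Your argument is shorter and less error-prone. You never have to track the order reversal $\rho^*(x)\rho^*(y)(f)=f\rho(y)\rho(x)$ through each axiom. It also gives Proposition \ref{prop3.3} for free and makes clear that the coadjoint representation is just the adjoint one in disguise. The paper's argument instead separates what holds for arbitrary representations (the ternary axioms) from what genuinely needs the quadratic structure (only (R21) and (R22)). This is exactly the template for Theorem \ref{th3.4}: for a non-quadratic $T$ no isomorphism $V\to V^*$ is available, and the direct check is what produces conditions (\ref{eq3.4})--(\ref{eq3.5}).

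Both arguments take for granted that $(T,l,R,L)$ is itself a representation.
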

\noindent
{\bf Proof.} That $(-R^* \iota, L_{-R^* \iota})$ satisfies (R1), (R31)-(R33) follows from Lemma \ref{lem3.1} (observe that the quadratic structure of $T$ is not needed here). It remains to check (R21) and (R22) for the triple $(l^* , -R^* \iota, L_{-R^* \iota})$.
\par
We have \\
$0= b(([L(x,y),l(z)] - l([\![x,y,z]\!])- l(x*y)l(z) + R(z,x*y))(v),u)$ (by (R21))\\
$= b([\![x,y,z*v]\!] - z*[\![x,y,v]\!] - [\![x,y,z]\!]*v - (x*y)*(z*v)+ [\![v,z,x*y]\!], u)$\\
$= -b(z*v, [\![x,y,u]\!]) + b([\![x,y,v]\!], z*u) + b(v, [\![x,y,z]\!]*u)$\\
$ - b(v*z, (x*y)*u) + b(v, [\![u,x*y,z]\!])$ (by (\ref{eq2.2}), (\ref{eq2.3}) and (\ref{eq2.4}))\\
$= b(v, z*[\![x,y,u]\!]) - b(v, [\![x,y,z*u]\!])+b(v,[\![x,y,z]\!]*u)$\\
$- b(v, z*((x*y)*u)) + b(v, [\![u,x*y,z]\!]$ (by (\ref{eq2.2}) and (\ref{eq2.4}))\\
$= b(v, l(z)L(x,y)(u) - L(x,y)l(z)(u) + l([\![x,y,z]\!])(u)$\\
$- l(z)l(x*y)(u) + R(x*y,z)(u))$\\
so the nondegeneracy of $b$ implies\\
$fl(z)L(x,y) - fL(x,y)l(z)+fl([\![x,y,z]\!])$\\
$- fl(z)l(x*y) + fR(x*y,z) = 0$,\\
$\forall x,y,z \in T$ and $f \in T^*$ i.e., by (\ref{eq3.1})-(\ref{eq3.3}),\\
$[L_{-R^* \iota} (x,y), l^*(z)]- l^* ([\![x,y,z]\!]) - l^* (x*y)l^* (z) + (-R^* {\iota})(z,x*y) = 0$.\\
Thus $(T^* , l^* ,-R^* \iota, L_{-R^* \iota})$ verifies (\ref{eq2.1}).
\par
Likewise, from the equality\\
$0= b(R(x,y*z)(v) - l(y)R(x,z)(v) + l(z)R(x,y)(v)$
\par
\hspace{0.5cm}$+ L(y,z)l(x)(v) - l(y*z)l(x)(v), u)$ (by (R22)),\\
using (\ref{eq2.2}), (\ref{eq2.3}) or (\ref{eq2.4})) whenever applicable and next the nondegeneracy of $b$, one gets\\
$fR(y*z,x) + fR(z,x)l(y) -fR(y,x)l(z)+ fl(x)L(y,z) - fl(x)l(y*z)$
$= 0$,\\
$\forall x,y,z \in T$ and $f \in T^*$ i.e.\\
$(-R^* \iota) (x,y*z) - l^* (y)(-R^* \iota) (x,z) + l^* (z)(-R^* \iota) (x,y) + L_{-R^* \iota} (y,z)l^*(x)$\\
$- l^* (y*z)l^* (x) = 0$\\
which is (R22) for $(l^* ,-R^* \iota, L_{-R^* \iota})$. This completes the proof. \hfill $\Box$\\

\noindent
{\bf Definition 3.1.}
Let $T$ be a Bol algebra, $(V_1 , {\rho}_1 , {\theta}_1)$ and $(V_2 , {\rho}_2 , {\theta}_2)$ two representations of $T$. A homomorphism from $(V_1 , {\rho}_1 , {\theta}_1)$ to $(V_2 , {\rho}_2 , {\theta}_2)$ is a linear map $\psi : V_1 \rightarrow V_2$ such that
\par
$\psi ({\rho}_1 (x)(v)) = {\rho}_2 (x) (\psi (v))$,
\par
$\psi ({\theta}_1 (x,y)(v)) = {\theta}_2 (x,y) (\psi (v))$\\
for all $x,y \in T$, $v \in V_1$. If, moreover, $\psi$ is a bijection then $(V_1 , {\rho}_1 , {\theta}_1)$ and $(V_2 , {\rho}_2 , {\theta}_2)$ are said to be isomorphic.
\begin{proposition}\label{prop3.3}
 Let $(T,b)$ be a quadratic Bol algebra. Then the adjoint representation $(T,l,R,L)$ is isomorphic to the coadjoint representation $(T^* , l^* ,-R^* \iota, L_{-R^* \iota})$ of
 $(T,b)$.
\end{proposition}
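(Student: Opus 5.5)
The natural candidate for the isomorphism is the map induced by the nondegenerate form $b$. We define $\psi : T \rightarrow T^*$ by $\psi(x) := b(x,\cdot)$, that is $\psi(x)(u) = b(x,u)$ for all $u \in T$. Since $b$ is nondegenerate and $T$ is finite-dimensional, $\psi$ is a linear bijection. By Definition 3.1 it then remains to check two intertwining identities: $\psi(l(x)(v)) = l^*(x)(\psi(v))$, and $\psi(R(x,y)(v)) = (-R^*\iota)(x,y)(\psi(v))$ for all $x,y,v \in T$. We will then also note that $\psi$ intertwines $L$ with $L_{-R^*\iota}$, which follows from the second identity together with (R1).

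For the binary part we evaluate both sides at an arbitrary $u \in T$. The left side is $b(x*v,u)$. The right side is, by (\ref{eq3.1}), $-\psi(v)(l(x)(u)) = -b(v,x*u)$. By (T01) and (\ref{eq2.2}),
\[
b(x*v,u) = -b(v*x,u) = -b(v,x*u),
\]
so the two sides agree.

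For the ternary part we use (\ref{eq3.2}) and the switching operator $\iota$. The right side is $-R^*(y,x)(\psi(v))$, which evaluated at $u$ gives $\psi(v)(R(y,x)(u)) = b(v,[\![u,y,x]\!])$. The left side is $b([\![v,x,y]\!],u)$. We must therefore show
\[
b([\![v,x,y]\!],u) = b(v,[\![u,y,x]\!]).
\]
This follows from the right invariance (\ref{eq2.3}) applied with the appropriate relabelling: $b([\![v,x,y]\!],u) = b([\![y,u,v]\!],x)$. Alternatively, one can use the symmetry $K(x,y,z,u)=K(z,u,x,y)$ from (\ref{eq2.8}) together with (T02) and the symmetry of $b$, which gives $b([\![v,x,y]\!],u)=b([\![y,u,v]\!],x)=b(v,[\![u,y,x]\!])$ after one more application of (\ref{eq2.3}) and (T02).

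The only real obstacle is keeping track of the sign conventions: the minus signs in (\ref{eq3.1})--(\ref{eq3.3}), the swap by $\iota$, and the order of arguments in $R(x,y)(w)=[\![w,x,y]\!]$. All of these must be matched correctly against (\ref{eq2.3}) and (\ref{eq2.4}). Once the ternary identity is verified for $R$, the statement for $L$ follows immediately. Indeed, (R1) gives $L(x,y)=R(y,x)-R(x,y)$ on the adjoint side, and $L_{-R^*\iota}$ is defined by the same formula from $-R^*\iota$. Since $\psi$ is linear, it therefore intertwines $L$ with $L_{-R^*\iota}$, which completes the isomorphism.
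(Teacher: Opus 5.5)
Your proof is correct and is essentially the paper's argument: the paper uses the same map $b^{\sharp}(x)(y)=b(x,y)$ and checks the same two intertwining identities, via (\ref{eq2.2}) for $l$ and the invariance of $b$ for $R$; your remarks on bijectivity and on $L$ versus $L_{-R^*\iota}$ via (R1) are harmless additions. One bookkeeping correction: the final step $b([\![y,u,v]\!],x)=b(v,[\![u,y,x]\!])$ is (T02) followed by the left invariance (\ref{eq2.4}), namely $b([\![y,u,v]\!],x)=-b([\![u,y,v]\!],x)=b(v,[\![u,y,x]\!])$, rather than a single further use of (\ref{eq2.3}), but since (\ref{eq2.4}) holds by Lemma \ref{lem2.2} the argument stands.
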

\noindent
{\bf Proof.} Let $b^{\sharp} : T \rightarrow T^*$, $x \mapsto b^{\sharp} (x)$, be the linear map defined by $b^{\sharp} (x)(y) := b(x,y)$ for all $x,y \in T$. Then,
$\forall u,z \in T$,
\par
$b^{\sharp} (l(x)(z))(u) = b^{\sharp} (x*z)(u) = -b(z*x, u)$
\par
$= -b(z, x*u) = -b^{\sharp} (z) (l(x)(u)) = l^* (x) (b^{\sharp} (z)(u))$\\
(we used (\ref{eq2.2})) and
\par
$b^{\sharp} (R(x,y)(z))(u) = b^{\sharp} ([\![z,x,y]\!])(u) = b([\![z,x,y]\!], u) = b(z, [\![u,y,x]\!])$
\par
$= b^{\sharp} (z) (R(y,x)(u)) = -R^* {\iota} (x,y) (b^{\sharp} (z)(u))$\\
(we used (\ref{eq2.3})). This completes the proof. \hfill $\Box$\\
\par
For a given Bol algebra $(T, *, [\![, ,]\!])$ (not necessarily quadratic) with representation $(V,\rho , \theta, D_{\theta})$ and maps ${\rho}^*$, ${\theta}^*$, and $D^*_{\theta}$ as defined by (\ref{eq3.1})-(\ref{eq3.3}), the quadruple $(V^* , {\rho}^* ,-{\theta}^* \iota, D_{-{\theta}^* \iota})$ is not a dual representation for $(T, *, [\![, ,]\!])$ in general (specifically,
$({\rho}^* ,-{\theta}^* \iota, D_{-{\theta}^* \iota})$ does not satisfy (R21) and (R22)), in contrast to the case of quadratic Bol algebras with respect to the adjoint representation (see Theorem \ref{th3.2} above; we wonder whether another definition of the triple $({\rho}^*, {\theta}^*,  D^*_{\theta})$ would generate a dual representation for $T$). However, up to to some additional conditions on the pair $(\rho, \theta)$, we get a dual representation for $(T, *, [\![, ,]\!])$.\\

\begin{theorem}\label{th3.4}
Let $T$ be a Bol algebra and $(V,\rho , \theta, D_{\theta})$ its representation. Then $(V^* , {\rho}^* ,-{\theta}^* \iota, D_{-{\theta}^* \iota})$ is a representation of $T$ if and only if the following conditions hold:
 \begin{equation}\label{eq3.4}
  \rho (u*v)\rho (w) + \rho (w)\rho (u*v) = \theta (u*v, w) + \theta (w, u*v),
 \end{equation}
 \begin{equation}\label{eq3.5}
  \theta (u,v)\rho (w) - \rho (w) \theta (v,u) = 0
 \end{equation}
 for all $u,v,w \in T$.
\end{theorem}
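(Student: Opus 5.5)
By Lemma \ref{lem3.1}, the triple $(-{\theta}^*\iota, D_{-{\theta}^*\iota})$ already satisfies (R1) and (R31)--(R33), since these axioms only involve the underlying Lie triple system. We also have $D_{-{\theta}^*\iota}=D^*_{\theta}$. So the whole statement reduces to showing that (R21) and (R22) for $({\rho}^*,-{\theta}^*\iota,D^*_\theta)$ are equivalent to (\ref{eq3.4}) and (\ref{eq3.5}). I will transport each dual identity back to $End(V)$ by evaluating on an arbitrary $f\in V^*$. From (\ref{eq3.1})--(\ref{eq3.3}) one reads off
\begin{align*}
&{\rho}^*(a){\rho}^*(c)f=f\rho(c)\rho(a), \qquad D^*_\theta(x,y){\rho}^*(z)f=f\rho(z)D_\theta(x,y),\\
&(-{\theta}^*\iota)(a,c)f=f\theta(c,a), \qquad {\rho}^*(a)(-{\theta}^*\iota)(c,d)f=-f\theta(d,c)\rho(a),
\end{align*}
that is, dualizing reverses the order of composition and swaps the arguments of $\theta$. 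Since an endomorphism $E$ with $fE=0$ for all $f$ is zero, each dual identity is equivalent to an identity in $End(V)$.

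For (R21), the dual identity becomes
\begin{equation*}
-f[D_\theta(x,y),\rho(z)]=-f\rho([\![x,y,z]\!])+f\rho(z)\rho(x*y)-f\theta(x*y,z).
\end{equation*}
I then subtract the original (R21), multiplied on the left by $-f$. The bracket and the $\rho([\![x,y,z]\!])$ terms cancel, and what remains is exactly
\begin{equation*}
f\bigl(\rho(z)\rho(x*y)+\rho(x*y)\rho(z)-\theta(x*y,z)-\theta(z,x*y)\bigr)=0 .
\end{equation*}
So the dual (R21) is equivalent to (\ref{eq3.4}) with $u*v=x*y$ and $w=z$. This step is clean in both directions.

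For (R22), the dual identity reads
\begin{equation*}
\theta(y*z,x)=-\theta(z,x)\rho(y)+\theta(y,x)\rho(z)-\rho(x)\bigl(D_\theta(y,z)-\rho(y*z)\bigr).
\end{equation*}
I will compare this with the original (R22) for $\theta(x,y*z)$, using three rewriting tools:
\begin{itemize}
\item (R1) in the form $\theta(y*z,x)=\theta(x,y*z)+D_\theta(x,y*z)$;
\item (R21) to move $D_\theta(y,z)$ past $\rho(x)$;
\item (\ref{eq3.4}) to move $\rho(y*z)$ past $\rho(x)$, at the cost of $\theta$-terms.
\end{itemize}
After these substitutions the leftover terms should be of the form $\theta(z,x)\rho(y)-\rho(y)\theta(x,z)$ and $\theta(y,x)\rho(z)-\rho(z)\theta(x,y)$. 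These vanish under (\ref{eq3.5}), which gives sufficiency.

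I expect the main obstacle to be the converse: extracting (\ref{eq3.5}) alone from the dual (R22). The difference identity couples two instances of (\ref{eq3.5}), with $(y,z)$ in the two roles. To separate them I would first try specializations, such as $y=z$ (where $y*z=0$), or linear combinations obtained by swapping $y$ and $z$ and using the antisymmetry of $y*z$ and $D_\theta(y,z)$. If that does not isolate a single term, I would check whether the intended reading of the theorem treats (\ref{eq3.4}) and (\ref{eq3.5}) jointly as the equivalent of (R21) and (R22) together. In that reading, necessity only requires recovering the combination that actually appears.
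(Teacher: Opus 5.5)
Your reduction to (R21) and (R22) matches the paper's proof, and so does your handling of (R21), which is a genuine equivalence with (3.4). In the (R22) step, however, the residue is not only the two differences you list. Put $E(u,v,w):=\theta(u,v)\rho(w)-\rho(w)\theta(v,u)$, so that (3.5) says $E=0$. Add your dual form of (R22) to the original (R22), and use (3.4) with $w=x$, $u*v=y*z$. Then the dual (R22) holds if and only if
\[
E(z,x,y)-E(y,x,z)+\rho(x)D_\theta(y,z)+D_\theta(y,z)\rho(x)=0 \quad\text{for all } x,y,z\in T .
\]
By (R1), the anticommutator equals $E(z,y,x)-E(y,z,x)$. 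That term also vanishes under (3.5), so sufficiency still goes through. This is what the paper does: it uses (3.5) for the $\theta\rho$ terms and to rewrite $\rho(x)D_\theta(y,z)=D_\theta(z,y)\rho(x)$, and (3.4) for $\rho(x)\rho(y*z)$. It never appeals to (R21) there.

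The real gap is the converse you flagged, and it cannot be closed, because it is false. The condition above says exactly that $E(a,b,c)+E(a,c,b)$ is totally symmetric in $a,b,c$, which is strictly weaker than $E=0$. For a concrete counterexample, let $T=\mathbb{R}e$, so both operations of $T$ vanish. Take $V=\mathbb{R}^2$, $\rho(e)=B$ and $\theta(e,e)=A$ with $AB\neq BA$.

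\begin{itemize}
\item (R1) forces $D_\theta=0$.
\item By bilinearity and $\dim T=1$, each of (R21)--(R33) reduces to $0=0$, both for $(V,\rho,\theta)$ and for $(V^*,\rho^*,-\theta^*\iota)$.
\item (3.4) holds trivially, yet (3.5) reads $AB=BA$.
\end{itemize}

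So the ``only if'' direction holds for (3.4) but fails for (3.5). Your fallback of reading (3.4) and (3.5) ``jointly'' does not help either, since here the dual is a representation while (3.5) fails.

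The paper's proof has the same defect. At the (R22) step it simply asserts ``if and only if (3.4) and (3.5) hold'', but its computation only justifies ``if''. What is actually provable is that the dual is a representation if and only if (3.4) holds and $E(z,x,y)+E(z,y,x)=E(y,x,z)+E(y,z,x)$ for all $x,y,z\in T$. In particular, (3.4) and (3.5) together are sufficient but not necessary.
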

\noindent
 {\bf Proof.} By Lemma \ref{lem3.1}, (R1), (R31)-(R33) are naturally verified for
 $(-{\theta}^* \iota, D_{-{\theta}^* \iota})$. One only needs to check that (R21) and (R22) hold for
 $({\rho}^* ,-{\theta}^* \iota, D_{-{\theta}^* \iota})$ if and only if (\ref{eq3.4}) and (\ref{eq3.5}) are satisfied.
\par
 For (R21) we have, $\forall x,y,z \in T$,
 \par
 $([D_{-{\theta}^* \iota} (x,y), {\rho}^* (z)] - {\rho}^* ([\![x,y,z]\!]) - {\rho}^* (x*y) {\rho}^* (z) + (-{\theta}^* \iota)(z,x*y)) (f)$
 \par
 $= f \rho (z) D_{\theta} (x,y) - f D_{\theta} (x,y) \rho (z) + f \rho ([\![x,y,z]\!]) - f \rho (z) \rho (x*y)$
 \par
 $+ f \theta (x*y, z)$
 \par
 $= f (- [D_{\theta} (x,y),\rho (z)] + \rho ([\![x,y,z]\!]) - \rho (z) \rho (x*y) + \theta (x*y, z))$
 \par
 $= f (- [D_{\theta} (x,y),\rho (z)] + \rho ([\![x,y,z]\!]) + \rho (x*y) \rho (z) + \theta (z,x*y))$ (if and only (\ref{eq3.4}) holds)
\par
 $= 0$ (by (R21))\\
 so (R21) holds for $({\rho}^* ,-{\theta}^* \iota, D_{-{\theta}^* \iota})$.
 \par
 For (R22) we have, $\forall x,y,z \in T$,
 \par
 $({\rho}^* (y)(-{\theta}^* \iota) (x,z) - {\rho}^* (z)(-{\theta}^* \iota) (x,y) - D_{-{\theta}^* \iota}(y,z) {\rho}^* (x)$
 \par
 $+ {\rho}^* (y*z) {\rho}^* (x) - (-{\theta}^* \iota) (x,y*z))(f)$
 \par
 $= -f( \theta (z,x)\rho (y) - \theta (y,x)\rho (z) +\rho (x) D_{\theta} (y,z) - \rho (x) \rho (y*z) + \theta (y*z,x))$
 \par
 $= -f(\rho (y)\theta (x,z) - \rho (z)\theta (x,y) + D_{\theta}(z,y)\rho (x)- \rho (z*y)\rho (x) + \theta (x,z*y))$ (if and only if (\ref{eq3.4}) and (\ref{eq3.5}) hold)
\par
 $=-f(\rho (y)\theta (x,z) - \rho (z)\theta (x,y) - (D_{\theta}(y,z) - \rho (y*z))\rho (x)-\theta (x,y*z))$
 \par
 $=0$ (by (R22))\\
 so (R22) holds for $({\rho}^* ,-{\theta}^* \iota, D_{-{\theta}^* \iota})$ and the proof is completed. \hfill $\Box$

\section{$T^*$-extensions of quadratic Bol algebras}

In this section we define a $T^*$-extension of a given quadratic Bol algebra as a generalization of a $T^*$-extension of a Lie triple system (\cite{LWD}). With this notion, starting from a quadratic Bol algebra, one gets a chain of quadratic Bol algebras.
\par
Given a Bol algebra $(T, *, [\![, ,]\!])$ and its representation $(V, \rho , \theta , D_{\theta})$, the notion of a $(2,3)$-cocycle $(\nu , \omega)$ for $(T, *, [\![, ,]\!])$ with respect to $(V, \rho , \theta , D_{\theta})$ was given in \cite{I1}. In particular the trilinear map
$\omega$ satisfies
\begin{equation}\label{eq4.1}
 \omega (x,y,z) = - \omega (y,x,z),
\end{equation}
\begin{equation}\label{eq4.2}
 {\circlearrowleft}_{x,y,z} \; \; \omega (x,y,z) = 0.
\end{equation}
\begin{theorem}\label{th4.1}
Let $(T, *, [\![, ,]\!], b)$ be a quadratic Bol algebra and $(\nu , \omega)$ a $(2,3)$-cocycle for $(T, *, [\![, ,]\!])$ with respect to its coadjoint representation  $(T^* , l^* ,-R^* \iota, L_{-R^* \iota})$. Define on the vector space $T \oplus T^*$ a binary operation $*_{\nu}$ and a ternary operation $[\![, ,]\!]_{\omega}$ by
\begin{equation}\label{eq4.3}
 (x+f) *_{\nu} (y+g) = x*y + \nu (x,y) + fl(y) - gl(x),
\end{equation}
\begin{equation}\label{eq4.4}
[\![x+f,y+g ,z+h]\!]_{\omega} = [\![x, y, z]\!] + \omega (x,y,z) + fR(z,y) - gR(z,x) + hL(y,x)
\end{equation}
for all $x,y,z \in T$ and $f,g,h \in T^*$. Then $T_{\nu, \omega} := (T \oplus T^* , *_{\nu} , [\![, ,]\!]_{\omega})$ is a Bol algebra. Moreover, the form $\tilde b : (T \oplus T^*) \times (T \oplus T^*) \rightarrow \mathbb{R}$ defined by
\par
${\tilde b} |_T = b$,
\par
$\tilde b (x+f,y+g) = f(y) + g(x)$\\
for all $x,y  \in T$ and $f,g \in T^*$, is bilinear, symmetric and nondegenerate. The form $\tilde b$ is invariant if and only if
\begin{equation}\label{eq4.5}
\nu (x,y)(z) = \nu (y,z)(x),
\end{equation}
\begin{equation}\label{eq4.6}
\omega (x,y,z)(u) = \omega (u,z,y)(x).
\end{equation}
\end{theorem}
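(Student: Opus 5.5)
The plan splits into three parts: the Bol algebra structure on $T\oplus T^*$, the elementary properties of $\tilde b$, and the invariance criterion.

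\emph{Bol algebra structure.} We identify $T_{\nu,\omega}$ with the abelian extension of $T$ by the coadjoint module. The formulas (\ref{eq4.3})--(\ref{eq4.4}) are the standard semidirect-product operations on $T\oplus T^*$ twisted by $(\nu,\omega)$. In them, $fl(y)$ and $-gl(x)$ are $l^*$-actions written with the appropriate signs. Likewise $fR(z,y)$, $-gR(z,x)$ and $hL(y,x)$ are the actions $\theta^*$, $-\theta^*$ and $D^*_\theta$ coming from $(-R^*\iota, L_{-R^*\iota})$. For instance, $hL(y,x)=-hL(x,y)=D^*_\theta(x,y)(h)$.

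We verify (T01), (T02), (T1), (T2), (T3) by projecting onto $T$ and onto $T^*$. The $T$-components hold because $T$ is a Bol algebra. Products involving two or more elements of $T^*$ vanish, so only terms linear in $T^*$ remain, together with the pure $\nu,\omega$ terms. (T01) and (T02) are immediate from (\ref{eq4.1}) and the antisymmetry of $\nu$ in the cocycle definition. (T1) follows from (\ref{eq4.2}) together with (R1) for the coadjoint maps. The terms linear in $f,g,h$ in (T2) and (T3) are exactly the representation identities (R21), (R22), (R31)--(R33) for the coadjoint representation, which hold by Theorem \ref{th3.2}. The pure $(\nu,\omega)$ terms in (T2) and (T3) are exactly the $(2,3)$-cocycle identities of \cite{I1}. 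This is the general principle that a $(2,3)$-cocycle with values in a representation yields an abelian extension, which I would cite from \cite{I1} or check term by term.

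\emph{Properties of $\tilde b$.} Bilinearity and symmetry are clear; we read the definition as $\tilde b(x+f,y+g)=b(x,y)+f(y)+g(x)$. For nondegeneracy, suppose $\tilde b(x+f,y+g)=0$ for all $y,g$. Taking $y=0$ forces $g(x)=0$ for all $g$, so $x=0$. Then $f(y)=0$ for all $y$, so $f=0$. The nondegeneracy of $b$ enters only through $T$ being quadratic, which is needed for the coadjoint representation in Theorem \ref{th3.2}.

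\emph{Invariance criterion.} We test (\ref{eq2.2}) and (\ref{eq2.3}) on $T_{\nu,\omega}$; by Lemma \ref{lem2.2} this suffices. By multilinearity, test on arguments each lying in $T$ or in $T^*$.

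\textbf{(\ref{eq2.2}).} With all arguments in $T$, we get $b(x*y,z)+\nu(x,y)(z)=b(x,y*z)+\nu(y,z)(x)$. Since $b$ satisfies (\ref{eq2.2}), this is exactly (\ref{eq4.5}). With exactly one argument in $T^*$, say $f$, the identity reduces to statements such as $f(x*y)$ compared with $-(fl(y))(x)$ and the like. These hold identically by antisymmetry of $*$, after matching sign conventions. With two or more arguments in $T^*$, both sides vanish.

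\textbf{(\ref{eq2.3}).} With all four arguments in $T$, the identity gives $\omega(x,y,z)(u)=\omega(z,u,x)(y)$. Combined with (\ref{eq4.1}) and (\ref{eq4.2}), this should be equivalent to (\ref{eq4.6}). Alternatively, testing (\ref{eq2.4}) gives $\omega(x,y,z)(u)=-\omega(x,y,u)(z)$. I would derive (\ref{eq4.6}) by applying the symmetry argument of Lemma \ref{lem2.2} (the Helgason lemma) to $K(x,y,z,u)=\omega(x,y,z)(u)$. The terms with one argument in $T^*$ reduce to identities like $f([\![z,u,x]\!])$ versus $f(R(z,y)\cdots)$, which hold identically.

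The main obstacle is this last step. Sign bookkeeping in (\ref{eq4.4}) must confirm that the mixed terms cancel without extra hypotheses. Each term has to be checked carefully in the four-argument case, according to which slot carries $T^*$. The equivalence between the right-invariant condition on $\omega$ and the specific form (\ref{eq4.6}) also needs to be established.
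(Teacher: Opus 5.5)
Your proposal is correct and follows the paper's route. The paper likewise obtains the Bol structure on $T\oplus T^*$ as the abelian extension by the coadjoint representation: it rewrites (4.3)--(4.4) in terms of $l^*$, $-R^*\iota$, $L_{-R^*\iota}$ and cites Theorem 3.4 of [I1]. It then checks invariance of $\tilde b$ componentwise, and the mixed terms you were worried about do cancel identically using only (T01) and (T02).

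The only difference is that you test (2.3) literally. This gives $\omega(x,y,z)(u)=\omega(z,u,x)(y)$, which you then convert to (4.6) by the Helgason-type argument of Lemma 2.2; that step works given (4.1)--(4.2). The paper instead tests the equivalent form $\tilde b([\![X,Y,Z]\!]_\omega,U)=\tilde b(X,[\![U,Z,Y]\!]_\omega)$, which yields (4.6) directly.
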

\noindent
 {\bf Proof.} Using (\ref{eq3.1})-(\ref{eq3.3}) we write (\ref{eq4.3}) and (\ref{eq4.4}) as
 \par
$(x+f) *_{\nu} (y+g) = x*y + \nu (x,y) - l^* (y) f + l^* (x) g$ \\
and
\par
$[\![x + f, y + g , z + h ]\!]_{\omega} = [\![x, y, z]\!] + \omega (x, y, z) + (-R^* \iota) (y,z)f$
\par
\hspace{4.0cm}$ - (-R^* \iota)(x,z)g + L_{-R^* \iota} (x,y)h$\\
respectively. Then, since $(T^* , l^* ,-R^* \iota, L_{-R^* \iota})$ is a representation of $(T,b)$ (see Theorem \ref{th3.2}), Theorem 3.4 in \cite{I1} implies that $T_{\nu , \omega}$ is a Bol algebra.
\par
Clearly $\tilde b$ is bilinear and symmetric. Now let $y+g \in T \oplus T^*$ be $\tilde b$-orthogonal to all $x+f \in T \oplus T^*$. Then $0 = \tilde b (x+f,y+g) = f(y) + g(x)$ which implies $f(y)=0=g(x)$, so $y=0$ and $g=0$. Thus $\tilde b$ is nondegenerate.
\par
It remains to check the invariance of $\tilde b$. We have
\par
$\tilde b ((x+f) *_{\nu} (y+g),z+h)= \nu (x,y)(z) + f(y*z) - g(x*z) + h(x*y)$\\
and
\par
$\tilde b (x+f, (y+g) *_{\nu}(z+h))= f(y*z) + \nu (y,z)(x) + g(z*x) - h(y*x)$.\\
Therefore $\tilde b$ satisfies (\ref{eq2.2}) if and only if $\nu$ satisfies (\ref{eq4.5}). Likewise,
\par
$\tilde b ([\![x + f, y + g , z + h ]\!]_{\omega}, u+k) = \omega (x,y,z)(u)$
\par
$+ f([\![u, z, y ]\!]) - g([\![u, z, x ]\!]) + h([\![y, x, u ]\!]) + k([\![x, y, z ]\!])$\\
and
\par
$\tilde b ( x+f, [\![u + k, z + h, y + g ]\!]_{\omega}) = f([\![u,z,y]\!])+ \omega (u,z,y)(x)$
\par
$+ k([\![x, y, z ]\!]) - h([\![x, y, u ]\!]) + g([\![z, u, x ]\!])$\\
so that $\tilde b$ satisfies (\ref{eq2.3}) if and only if $\omega$ verifies (\ref{eq4.6}). \hfill $\Box$\\

\noindent
{\bf Definition 4.1.}
 The quadratic Bol algebra $(T_{\nu, \omega}, \tilde b)$ constructed in Theorem \ref{th4.1} is called the $T^*$-extension of the quadratic Bol algebra $(T,b)$ by the $(2,3)$-cocycle $(\nu, \omega)$. If $(\nu, \omega) = (0,0)$ then $(T_{0,0}, \tilde b)$ is called the trivial extension of $(T,b)$.\\

\noindent
{\bf Remark 4.2.}
If $x*y =0$, $\forall x,y \in T$ and $\nu =0$, then $(T_{0, \omega}, \tilde b)$ is the $T^*$-extension of the Lie triple system $(T,[\![, ,]\!])$ by the $3$-cocycle $\omega$ as defined in \cite{LWD}. Theorem \ref{th4.1} also implies that any quadratic Bol algebra gives rise to its trivial extension with double dimension, so one gets a chain of quadratic Bol algebras.\\
\par
In the proof of Theorem \ref{th4.1} above we used the right invariant condition (\ref{eq2.3}). If one uses the left invariant condition (\ref{eq2.4}), then one must compute
\par
$\tilde b (z+y, [\![x + f, y + g , u + k ]\!]_{\omega}) = - h([\![x,y,u]\!]) - \omega (x,y,u)(z)$
\par
\hfill $+ f([\![u, z, y]\!]) - g([\![u, z, x]\!])+ k([\![x, y, z]\!])$\\
and so $\tilde b$ satisfies (\ref{eq2.4}) if and only if
\begin{equation}\label{eq4.7}
\omega (x,y,z)(u) = - \omega (x,y,u)(z).
\end{equation}
Observe that this condition (\ref{eq4.7}) is the same as in the case of Lie-Yamaguti algebras (see (3.6) in \cite{I2}).
\begin{proposition}\label{prop4.2}
 The conditions (\ref{eq4.6}) and (\ref{eq4.7}) are equivalent.
\end{proposition}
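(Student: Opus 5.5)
The plan is to mimic the proof of Lemma \ref{lem2.2}, working with the $4$-linear form
\[
W(x,y,z,u) := \omega (x,y,z)(u), \qquad x,y,z,u \in T .
\]
We have $\omega (x,y,z)\in T^*$, and by (\ref{eq4.1}) and (\ref{eq4.2}) this form satisfies
\[
W(x,y,z,u) = -W(y,x,z,u), \qquad W(x,y,z,u)+W(y,z,x,u)+W(z,x,y,u)=0 .
\]
These are exactly the analogues of (\ref{eq2.5}) and (\ref{eq2.7}) for $K$. In this notation, (\ref{eq4.6}) reads $W(x,y,z,u)=W(u,z,y,x)$ and (\ref{eq4.7}) reads $W(x,y,z,u)=-W(x,y,u,z)$.

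First I would show (\ref{eq4.6}) $\Rightarrow$ (\ref{eq4.7}). Apply (\ref{eq4.6}), then (\ref{eq4.1}), then (\ref{eq4.6}) again:
\[
W(x,y,z,u)=W(u,z,y,x)=-W(z,u,y,x)=-W(x,y,u,z).
\]
This is (\ref{eq4.7}), and it is the same three-step chain as the first half of the proof of Lemma \ref{lem2.2}.

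Conversely, assume (\ref{eq4.7}). Then $W$ is skew in its first pair of arguments by (\ref{eq4.1}), skew in its last pair by (\ref{eq4.7}), and satisfies the cyclic identity in its first three arguments by (\ref{eq4.2}). These are precisely the hypotheses (\ref{eq2.5})--(\ref{eq2.7}) used in Lemma \ref{lem2.2}. The algebraic Lemma 12.4 (Ch. I) of \cite{Hel} uses only these three identities and no metric, so it gives the pair symmetry $W(x,y,z,u)=W(z,u,x,y)$. If one prefers a self-contained argument, I would reproduce the standard curvature-tensor proof: write the cyclic identity for each of the four cyclic choices of three variables out of $(x,y,z,u)$, add them, and cancel using the two skew-symmetries.

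From the pair symmetry, together with the two skew-symmetries, we get
\[
W(u,z,y,x)=W(y,x,u,z)=-W(x,y,u,z)=W(x,y,z,u),
\]
which is (\ref{eq4.6}). The only step that needs care is confirming that Helgason's lemma applies to an arbitrary real $4$-linear form with these symmetries. It does, since the lemma is purely combinatorial. The remaining work is sign bookkeeping, which must match the conventions of (\ref{eq4.6}) and (\ref{eq4.7}) exactly.
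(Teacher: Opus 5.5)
Your proof is correct. The forward implication is the paper's three-step chain verbatim, but the converse takes a different route from the paper's proof of this proposition.

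The paper does not invoke Helgason's lemma here. It manipulates (\ref{eq4.1}), (\ref{eq4.2}) and (\ref{eq4.7}) directly to get two linear relations between $A:=\omega(x,y,z)(u)-\omega(u,z,y)(x)$ and $B:=\omega(u,y,z)(x)-\omega(x,z,y)(u)$: $A=B$ in (\ref{eq4.9}) and $A=-B$ in (\ref{eq4.10}), so $A=0$.

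Your reduction to the pair symmetry $W(x,y,z,u)=W(z,u,x,y)$ of a curvature-type $4$-form is more conceptual. It shows that Proposition \ref{prop4.2} is the same combinatorial fact as Lemma \ref{lem2.2}. The only difference is that skewness in the last pair is now given outright by (\ref{eq4.7}), rather than extracted from (\ref{eq2.4}) through the symmetry of $b$. It also explains why the equivalence holds instead of just verifying it. Your self-contained fallback, summing the four cyclic identities and cancelling with the two skew-symmetries, is exactly the right argument.

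The paper's computation gains self-containedness at the cost of transparency. In effect it redoes, for this one instance, the cancellation that underlies Helgason's lemma. Both arguments end by dividing by $2$, which is harmless over $\mathbb{R}$.
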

\noindent
{\bf Proof.} First suppose (\ref{eq4.6}). Then
\par
$\omega (x,y,z)(u) = \omega (u,z,y)(x) = -\omega (z,u,y)(x) = -\omega (x,y,u)(z)$ (by (\ref{eq4.6}))\\
so we get (\ref{eq4.7}).
\par
For the converse we proceed as follows. From (\ref{eq4.7}), switching $x$ and $u$, $y$ and $z$, we get
\begin{equation}\label{eq4.8}
\omega (u,z,y)(x) = - \omega (u,z,x)(y).
\end{equation}
Subtracting memberwise (\ref{eq4.8}) from (\ref{eq4.7}) we get
\par
$\omega (x,y,z)(u) - \omega (u,z,y)(x) = - \omega (x,y,u)(z) + \omega (u,z,x)(y)$
\par
$= \omega (y,u,x)(z) - \omega (u,x,z)(y) + \omega (u,z,x)(y)$ (by (\ref{eq4.2}) and (\ref{eq4.7}))
\par
$= \omega (y,u,x)(z) - \omega (u,x,z)(y) - \omega (z,x,u)(y) - \omega (x,u,z)(y)$ (by (\ref{eq4.2}))
\par
$= \omega (y,u,x)(z) - \omega (z,x,u)(y)$ (by (\ref{eq4.1}))
\par
$= \omega (u,y,z)(x) - \omega (x,z,y)(u)$ (by (\ref{eq4.1}) and (\ref{eq4.7})).\\
Thus we obtained
\begin{equation}\label{eq4.9}
 \omega (x,y,z)(u) - \omega (u,z,y)(x) = \omega (u,y,z)(x) - \omega (x,z,y)(u).
\end{equation}
From the other hand, applying (\ref{eq4.2}) to both side of (\ref{eq4.7}), we get
\par
$-\omega (y,z,x)(u) - \omega (z,x,y)(u) = \omega (y,u,x)(z) + \omega (u,x,y)(z)$\\
i.e., by (\ref{eq4.7}) and (\ref{eq4.1}),
\par
$\omega (y,z,u)(x) + \omega (x,z,y)(u) = \omega (u,y,z)(x) + \omega (u,x,y)(z)$.\\
Therefore we have
\par
$\omega (x,z,y)(u) - \omega (u,y,z)(x) = \omega (u,x,y)(z) - \omega (y,z,u)(x)$
\par
$= - \omega (x,y,u)(z) - \omega (y,u,x)(z) + \omega (z,u,y)(x) + \omega (u,y,z)(x)$ (by (\ref{eq4.2}))\par
$= - \omega (x,y,u)(z) - \omega (y,u,x)(z) + \omega (z,u,y)(x) - \omega (u,y,x)(z)$ (by (\ref{eq4.7}))\par
$= \omega (x,y,z)(u) - \omega (u,z,y)(x)$ (by (\ref{eq4.1}) and (\ref{eq4.7}))\\
and so
\begin{equation}\label{eq4.10}
-(\omega (u,y,z)(x) - \omega (x,z,y)(u)) = \omega (x,y,z)(u) - \omega (u,z,y)(x).
\end{equation}
From (\ref{eq4.9}) and (\ref{eq4.10}) we conclude that $\omega (x,y,z)(u) - \omega (u,z,y)(x) =0$ which is (\ref{eq4.6}). This completes the proof. \hfill $\Box$
\\

\noindent
{\bf Remark 4.3.} (i) The equivalence of (\ref{eq4.6}) and (\ref{eq4.7}) corroborates, in some sense, the equivalence of (\ref{eq2.3}) and (\ref{eq2.4}). Further properties of $T^*$-extensions of quadratic Bol algebras could be investigated as in \cite{LWD} for Lie triple systems.
\par
(ii) In view of the preceding discussion (see section 3) the study of $T^*$-extensions of not necessarily quadratic Bol algebras should be limited to those whose representations satisfy (\ref{eq3.4}) and (\ref{eq3.5}).

\end{document}